\newcommand{\R}{\mathbb R}
\newcommand{\N}{\mathbb N}
\newcommand{\C}{\mathbb C}
\newcommand{\D}{\displaystyle}
\newcommand{\Om}{\Omega}
\newcommand{\Div}{\text{div}}
\newcommand{\pa}{\partial}
\newcommand{\hz}{\hat{z}}
\newcommand{\hp}{\hat{\varphi}}
\newtheorem{thm}{Theorem}[section]
\newtheorem{lem}[thm]{Lemma}
\newtheorem{prop}[thm]{Proposition}
\theoremstyle{definition}
\theoremstyle{remark}
\newtheorem{rem}[thm]{Remark}
\numberwithin{equation}{section}
\begin{document}

\title[The turnpike problem]{TURNPIKE PROPERTY FOR TWO-DIMENSIONAL NAVIER-STOKES EQUATIONS*}

\author[S. Zamorano]{S. Zamorano$^1$}
\footnotetext[1]{Departamento de Ingenier\'ia Matem\'atica,
Universidad de Chile (UMI CNRS 2807), Avenida Beauchef 851, Ed. Norte, Casilla 170-3, Correo 3, Santiago, Chile \\
E-mail: {\it szamorano@dim.uchile.cl}}

\thanks{*Supported by CONICYT Doctoral fellowship 2012-21120662. This work was achieved while I was visiting the BCAM-Basque Center for Applied Mathematics in Bilbao and was partially supported by the Advanced Grant NUMERIWAVES/FP7-246775 of the European Research Council Executive Agency, 
the FA9550-15-1-0027 of AFOSR, the MTM2011-29306 and MTM2014-52347 Grants of the MINECO}

\begin{abstract}

This paper is devoted to the study of the turnpike phenomenon arising in the optimal distributed control tracking-type problem for the Navier-Stokes equations. We obtain a positive answer to this property in the case when the controls are time-dependent functions, and also when are independent of time. In both cases we prove an exponential turnpike property assuming that the stationary optimal state satisfy certain properties of smallness. 
\end{abstract}

\maketitle

\section{Introduction}

In this article, we are dealing with optimal control problem of the incompressible Navier-Stokes equations in two dimensions, both evolutionary and stationary problem. We try to understand what is the relationship between the optimal solution of the nonstationary and the stationary problem, when the times goes to infinity. Specifically, we want to know the conditions under which the nonstationary optimal control and state converges to the stationary optimal control and state, respectively. 

We consider two cases, when the controls are dependent on time and the case where are independent on time. In both cases the optimal control problem consists in minimizing a functional involving both the control and the measure of the difference between the state and a desired stationary state, and terminal constraint. Then, the main idea of this paper is to prove that the optimal controls achieve to get the target state and remains on this situation most of the time. 

In the first case, we establish a result of exponential convergence of the optimality systems associated to the Navier-Stokes equations. We prove, see Theorem \ref{teo10} in Section $5$, under some appropriate smallness conditions of the optimal solutions for the stationary problem, that both optimal evolutionary state and control converge to the respective optimal stationary control and state in a local sense with a exponential rate.

For the second case, as we consider time-independent controls, using the $\Gamma$-convergence we prove that the accumulation point of a sequence of controls for the evolutionary optimal control problem is an optimal control for the stationary problem, see Theorem \ref{teo15} in Section $6$. In this case, we need to ensure the exponential stabilization of the solution of the nonstationary Navier-Stokes problem to the solution of the stationary Navier-Stokes equation, under some smallness condition. 

The smallness condition for the optimal state of the stationary equations is because it is well known that the solution of the stationary Navier-Stokes system is unique when the viscosity is large enough with respect to the right hand side \cite{temam2001navier}. If we remove this condition we need to work with solutions for which the equation is locally unique. These solutions are called \emph{nonsingular solutions}, see, for instance, Casas et al. \cite{casas2007error}.

The study of this type of relationship is commonly used in many models of the fluid mechanics, where the stationary model is considered instead of the evolutionary system. Namely, the underlying idea is that when the time horizon is large enough, the evolutionary optimal control are sufficiently close to the stationary optimal control.

For example, in aeronautics most of the techniques to solve shape optimization are based on stationary models. In that case, is assumed or understood that the optimal shape is close enough to the evolutionary optimal shape, see, for instance, \cite{huan1994optimum}. There are no results justifying such assumptions, specially for models from the fluids mechanics, as the Navier-Stokes or Euler equations (see \cite{jameson2010optimization}).

A recent answer to this problem is given in \cite{porretta2013long}. The authors examined such questions in the context of linear control problems both in the finite dimensional case as infinite dimensional systems, including the linear heat and wave equations. They proved, under suitable observability and controllability assumptions, that optimal controls and state converge exponentially when the time is sufficiently large, to the corresponding stationary case. Porretta and Zuazua in \cite{porretta2013long} mentioned that this type of property in the economy field, specifically in econometry, is known as the \emph{turnpike property}, concept introduced by P. Samuelson. In \cite{trelat2015turnpike}, the authors proved the turnpike property in the case of nonlinear optimal control problem in the finite-dimensional case.

Also, this type of approach can be observed in optimal design. We mention \cite{allaire2010long}, where the autohrs proved that when the time tends to infinity, the optimal design of coefficients of parabolic dynamics converge to those of the elliptic steady state problem. This approach use the classical $\Gamma$-convergence, because they consider coefficients which are independent of time. 

In this work, we consider the Navier-Stokes equations in two dimensions. Navier-Stokes equations are useful because they describe the physics of many things of scientific and engineering interest. They may be used to model the weather, ocean currents, water flow in a pipe and air flow around a wing. The Navier-Stokes system in their full and simplified forms help with the design of aircraft and cars, the study of blood flow, the design of power stations, the analysis of pollution, among others. 

It is well known that in the three-dimensional cases there are many open problems connected with smoothness and uniqueness of weak solutions, both nonstationary as stationary models. Hence, in this paper we restrict our attention to the two-dimensional case. In particular, we consider incompressible and newtonian fluids. Namely, the density remains constant within a parcel of fluid that moves with the flow velocity and constant viscosity, respectively. Obviously, the next step is consider both Euler as Navier-Stokes equations for compressible and viscous fluids. This type of fluids are more realistic in the field of aeronautic (see \cite{jameson1998optimum}), but this models could be much more complex.

The literature of optimal control problem for the Navier-Stokes equations are very extensive. We mention the work of \cite{abergel1990some} for evolution optimal control problems in fluids mechanics in the case of two-dimensional flows. Also, the PhD thesis \cite{hinze2000optimal} and \cite{wachsmuth2006optimal} for Navier-Stokes equations. In the stationary case, we refer \cite{abergel1993some} and \cite{de2004primal}, and the references therein. 

Since the main result of this paper is in a local sense, for technical reasons, we need some properties about the linearized Navier-Stokes equations. This equation is known as \emph{Oseen equation} or \emph{Stokes-Oseen equation}. The importance of this equation for the study of the Navier-Stokes system is fundamental, specially for the feedback stabilization of the Navier-Stokes problem around an unstable stationary solution, see \cite{barbu2003feedback, fursikov2001stabilizability, fursikov2012feedback, raymond2006feedback}. In our case, the Oseen equation is fundamental to obtain a positive response on the turnpike property for the Navier-Stokes problem.

The outline of the paper is a follows. In Section $2$ we introduce some function space according to the theory for Navier-Stokes equations, and present the basic result of existence and uniqueness for the state equations, both the evolutionary problem and stationary. Also, we gives the basic properties of the Oseen equation. In Section $3$ we formulate the optimal control problem for both nonstationary problem and stationary, and present existence results, first order necessary and second order conditions. In Section $4$, we state and prove the main result of the paper, see Theorem \ref{teo10}. Finally, in Section $5$, we prove a turnpike property in the special case when the controls are independent of time.

\setcounter{equation}{0}
\section{Mathematical Setting}

In this section, we provide some functional analytic background to study the Navier-Stokes equations. Here, we rely on the book by Temam \cite{temam2001navier}. The existence, uniqueness and regularity of weak solutions to the Navier-Stokes equations is completely understood in the two-dimensional case. Hence, we focus our work to the two-dimensional case.

Let $\Om\subset \R^2$ be a bounded and simply connected domain, with boundary $\pa\Om$ of class $C^2$. 
Following Temam \cite{temam2001navier}, we set
\begin{align*}
V=\{v\in (H_{0}^{1}(\Om))^2:\; \Div \; v=0\}, \;\; H=\{v\in (L^2(\Om))^2:\;\Div\; v=0,\; \gamma_{n}u=0\},
\end{align*}
where $\gamma_{n}$ denotes the normal component of the trace operator. 

The spaces $V,H$, and $V'$ satisfies
\begin{align*}
V\subset H=H'\subset V'
\end{align*}
with dense and continuous imbedding.  

Let us introduce a trilinear form $b: V\times V\times V\to \R$ as the variational formulation of the nonlinearity term $(u\cdot\nabla)v$ by
\begin{align*}
b(u,v,w)=\D\int_{\Om}((u\cdot\nabla v))\cdot w dx.
\end{align*}

We know  that the trilinear form $b$ satisfies the properties of Lemma \ref{lema3.1}. This properties are fundamental for the study of the Navier-Stokes equations, and will be used throughout the paper.
\begin{lem}[see Chapter III, \cite{temam2001navier}]\label{lema3.1}

\smallskip\

\begin{enumerate}
\item $b(u,v,w)+b(u,w,v)=0$, $\forall u\in V$, $\forall v,w\in (H^{1}(\Om)^2$.
\item $b(u,v,v)=0$, $\forall u\in V$, $\forall v\in (H^{1}(\Om)^2$.
\item $b(u,v,w)=((\nabla v)^{T}w,u)$, $\forall u,v,w \in (H^{1}(\Om)^2$. 
\item For all $u\in V$ and all $v,w\in (H^{1}(\Om)^2$ we have

\begin{align}\label{2.1}
|b(u,v,w)|\leq C \|u\|_{L^2(\Om)}^{1/2}\|u\|_{H^1(\Om)}^{1/2}\|v\|_{L^2(\Om)}^{1/2}\|v\|_{H^1(\Om)}^{1/2}\|w\|_{H^1(\Om)}.
\end{align}
\end{enumerate}
\end{lem}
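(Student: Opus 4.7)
The plan is to establish (1) by integration by parts, derive (2) and (3) as immediate consequences, and prove the estimate (4) by combining (1) with H\"older's inequality and the two-dimensional Ladyzhenskaya inequality.

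For (1), I would write $b(u,v,w)=\sum_{i,j}\int_{\Om} u_i(\pa_i v_j)\,w_j\,dx$ and integrate by parts in the variable $x_i$. Because $u\in V\subset (H_{0}^{1}(\Om))^2$, its trace on $\pa\Om$ vanishes, so the boundary contribution disappears. Expanding $\pa_i(u_i w_j)=(\pa_i u_i)w_j+u_i\pa_i w_j$ and summing in $i$, the first term contributes $(\Div u)\,w_j$, which is zero since $u\in V$. What remains is precisely $-b(u,w,v)$, giving (1). Setting $v=w$ in (1) yields (2). For (3) no integration by parts is needed: the integrand can simply be rewritten as $\sum_{i,j} u_i(\pa_i v_j)w_j=\sum_{i}u_i\big((\nabla v)^{T}w\big)_i$, which is the integrand of the $L^2$ inner product $((\nabla v)^{T}w,u)$.

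For (4), I would use (1) to move the derivative off of $v$, obtaining $|b(u,v,w)|=|b(u,w,v)|$, and then apply H\"older's inequality with exponents $4,2,4$:
\begin{align*}
|b(u,w,v)|\le \|u\|_{L^4(\Om)}\,\|\nabla w\|_{L^2(\Om)}\,\|v\|_{L^4(\Om)}.
\end{align*}
The two-dimensional Ladyzhenskaya interpolation inequality $\|f\|_{L^4(\Om)}\le C\|f\|_{L^2(\Om)}^{1/2}\|f\|_{H^1(\Om)}^{1/2}$, applied to $u$ and to $v$, together with the trivial bound $\|\nabla w\|_{L^2(\Om)}\le \|w\|_{H^1(\Om)}$, then yields (\ref{2.1}).

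The only conceptual subtlety is in the proof of (1): the argument needs \emph{both} the divergence-free condition and the vanishing trace of $u$, which is exactly why the hypothesis $u\in V$ (and not merely $u\in (H^{1}(\Om))^2$) is imposed in (1), (2), and (4); in (3), by contrast, the identity is purely algebraic and therefore holds for all $u,v,w\in (H^{1}(\Om))^2$. Once (1) is established, step (4) is a routine application of Sobolev interpolation in two space dimensions.
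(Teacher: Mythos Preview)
Your argument is correct. The paper does not supply its own proof of this lemma; it simply records the result with a reference to Temam, Chapter~III. Your derivation of (1) via integration by parts (using both $\Div u=0$ and the zero trace of $u\in V$), the immediate consequences (2)--(3), and the proof of (4) through the antisymmetry (1), H\"older with exponents $(4,2,4)$, and the two-dimensional Ladyzhenskaya inequality are exactly the classical route taken in Temam's book, so there is nothing to compare beyond noting that you have reproduced that standard proof accurately.
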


Let $A$ be a operator defined as follows: $Ay=-P(\Delta y)$, where $\Delta$ is the vector Laplacian, and $P$ is the orthogonal projector from $(L^2(\Om))^2$ onto $H$, called the Leray projector. And let $B$ be the nonlinear operator from $V$ into its dual $V'$, such that 
\begin{align*}
\D\langle By,v\rangle_{V',V}=b(y,y,v)=\D\int_{\Om}((y\cdot \nabla) y)v dx, \quad\forall v\in V.
\end{align*}

Concerning the operator $B$, we have the following properties for the differentiability that we use throughout this work.
\begin{prop}[see \cite{abergel1990some}]

\smallskip\

\begin{enumerate}
\item $y\to B(y)$ is differentiable from $V$ into $V'$, and we have
\begin{align*}
\langle B'(y)v,w\rangle=b(y,v,w)+b(v,y,w).
\end{align*}
\item Let $B'(y)^{*}$ denote the adjoint of $B'(y)$ for the duality between $V$ and $V'$, then we have
\begin{align*}
\langle B'(y)^{*}v,w\rangle=b(w,y,v)-b(y,v,w).
\end{align*}
\end{enumerate}
\end{prop}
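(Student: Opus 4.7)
The plan is to verify both items directly from the definition $\langle By,v\rangle_{V',V} = b(y,y,v)$ together with the trilinear estimate (2.1) and the symmetry identities of Lemma \ref{lema3.1}.

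For item (1), I would start by expanding, for $y,h\in V$ and any test $w\in V$,
\begin{align*}
\langle B(y+h)-B(y),w\rangle = b(y,h,w)+b(h,y,w)+b(h,h,w),
\end{align*}
which follows from trilinearity of $b$. This suggests defining the candidate derivative by $\langle B'(y)v,w\rangle := b(y,v,w)+b(v,y,w)$. Using estimate (2.1), the map $w\mapsto b(y,v,w)+b(v,y,w)$ is continuous on $V$, so $B'(y)v\in V'$, and the resulting operator $B'(y)\colon V\to V'$ is linear and bounded. To conclude Fr\'echet differentiability, I only need to control the remainder $b(h,h,w)$: by (2.1),
\begin{align*}
|b(h,h,w)| \leq C\,\|h\|_{L^2}\|h\|_{H^1}\|w\|_{H^1} \leq C\,\|h\|_V^{2}\,\|w\|_V,
\end{align*}
hence $\|B(y+h)-B(y)-B'(y)h\|_{V'} = O(\|h\|_V^{2})$, which gives differentiability with the claimed derivative.

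For item (2), the adjoint $B'(y)^{*}\colon V\to V'$ is defined (for the $V$--$V'$ duality, identifying $V$ as a subspace of its bidual) by the identity $\langle B'(y)^{*}v,w\rangle = \langle B'(y)w,v\rangle$ for all $w\in V$. Substituting the formula from item (1),
\begin{align*}
\langle B'(y)^{*}v,w\rangle = b(y,w,v)+b(w,y,v).
\end{align*}
Now I would apply part (1) of Lemma \ref{lema3.1}, namely $b(y,w,v)+b(y,v,w)=0$, to rewrite $b(y,w,v)=-b(y,v,w)$, obtaining the claimed expression $\langle B'(y)^{*}v,w\rangle = b(w,y,v)-b(y,v,w)$.

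The computation is essentially routine; the only subtlety is ensuring that every term genuinely sits in $V'$ so that the dual pairings make sense. This is handled uniformly by estimate (2.1), which shows that each of the maps $w\mapsto b(y,v,w)$, $w\mapsto b(v,y,w)$, $w\mapsto b(w,y,v)$ is bounded on $V$ whenever $y,v\in V$. No additional obstacle is expected.
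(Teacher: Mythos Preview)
Your argument is correct and is exactly the standard verification: expand $b(y+h,y+h,w)$ by trilinearity, identify the linear part as the derivative, bound the quadratic remainder by (2.1), and then obtain the adjoint formula by swapping the roles of $v,w$ and applying the antisymmetry $b(y,w,v)=-b(y,v,w)$ from Lemma~\ref{lema3.1}(1).

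Note, however, that the paper does not give its own proof of this proposition: it is stated with the citation ``see \cite{abergel1990some}'' and no argument. So there is nothing to compare your approach against in this paper; your proof simply fills in what the paper leaves to the reference, and does so correctly.
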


\subsection{Nonstationary Navier-Stokes Problem}

Given $T>0$, we denote $\Om_{T}=\Om\times(0,T)$ and $\Gamma_{T}=\pa\Om\times(0,T)$. Under the previous framework, we consider the incompressible Navier-Stokes problem

\begin{align}\label{2.2}
\left\{\begin{array}{rllll}
y_{t}-\mu\Delta y+(y\cdot\nabla)y+\nabla p&=&u& ,& \text{in }Q_{T},\\
\Div \ y&=&0&,&\text{in } Q_{T},\\
y&=&0&,&\text{on }\Gamma_{T},\\
y(x,0)&=&y_0(x)&,& x\in\Om,
\end{array}
\right.
\end{align}
where the forcing term $u$ is in $L^2(0,T; H)$, the initial data $y_0$ is in $H$, and the kinematic viscosity $\mu>0$.

The Navier-Stokes equations \eqref{2.2} in $\Om$ can be written under the following form, see \cite{temam2001navier},
\begin{align}\label{2.3}
\left\{\begin{array}{rllll}
\D\frac{dy(t)}{dt}+\mu Ay(t)+By(t)&=&u(t)&,&t\leq 0,\\
y(0)&=&0&.&
\end{array}
\right.
\end{align}

This variational formulation, excluding the pressure, of the Navier-Stokes problem is by now classical. We recall the classical result of existence and uniqueness of weak solutions related to \eqref{2.2}.

\begin{thm}[see Chapter III, \cite{temam2001navier}]\label{teo1}
For any given $u\in L^2(0,T; (L^2(\Om))^2)$ and $y_0\in V$, there exists a unique weak solution of \eqref{2.2} satisfying for all $T>0$
\begin{align*}
(y,p)\in(C([0,T];V)\cap L^2(0,T;(H^2(\Om))^2\cap V))\times L^{2}(0,T; H^1(\Om)\cap L_0^2(\Om)),
\end{align*}
and
\begin{align*}
y_{t}\in L^2(0,T; H).
\end{align*}
Moreover, it satisfies the following energy equality for all $t\in \R^{+}$,
\begin{align*}
\D\frac{1}{2}\|y(t)\|_{L^2(\Om)}^{2}+\mu\int_{0}^{t}\|\nabla y(\tau)\|_{L^2(\Om)}^2 d\tau=\frac{1}{2}\|y_0\|_{L^2(\Om)}^2+\int_{0}^{t}\langle u(\tau),y(\tau)\rangle_{L^2(\Om)}d\tau.
\end{align*}
\end{thm}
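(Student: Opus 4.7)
The plan is to follow the classical Faedo-Galerkin scheme as presented in Temam \cite{temam2001navier}. Let $\{w_j\}_{j\geq 1}$ be the orthonormal basis of $H$ formed by eigenfunctions of the Stokes operator $A$, which is simultaneously orthogonal in $V$ and in $D(A)$; set $V_n=\mathrm{span}\{w_1,\ldots,w_n\}$ with $P_n$ the orthogonal projection onto $V_n$, and look for $y_n(t)=\sum_{j=1}^n c_{nj}(t)w_j$ solving the finite-dimensional ODE
\begin{align*}
\D\frac{d}{dt}(y_n,w_j)+\mu(Ay_n,w_j)+b(y_n,y_n,w_j)=(u,w_j),\qquad y_n(0)=P_n y_0.
\end{align*}
The Cauchy-Lipschitz theorem provides local existence of the coefficients $c_{nj}(t)$.

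The heart of the argument is a sequence of a priori estimates. Testing against $y_n$ and using $b(y_n,y_n,y_n)=0$ from Lemma \ref{lema3.1}(2) gives the basic energy bound, whereby $y_n$ is uniformly bounded in $L^\infty(0,T;H)\cap L^2(0,T;V)$ and, consequently, global in time. Exploiting $y_0\in V$ I would then test against $Ay_n$ (admissible since $Ay_n\in V_n$) and control the nonlinear term by the two-dimensional interpolation
\begin{align*}
|b(y_n,y_n,Ay_n)|\leq C\|y_n\|_{L^2}^{1/2}\|\nabla y_n\|_{L^2}\|Ay_n\|_{L^2}^{3/2},
\end{align*}
absorbing $\tfrac{\mu}{2}\|Ay_n\|_{L^2}^2$ via Young's inequality to obtain a Gronwall-ready differential inequality. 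This yields $y_n$ bounded in $L^\infty(0,T;V)\cap L^2(0,T;D(A))$, and reinserting into the equation gives $y_n'$ bounded in $L^2(0,T;H)$.

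Passing to the limit is then routine. Since $D(A)\hookrightarrow V$ is compact, the Aubin-Lions lemma provides a subsequence converging strongly in $L^2(0,T;V)$ to some $y$, enough to pass to the limit in the quadratic term $b(y_n,y_n,\cdot)$. The pressure $p\in L^2(0,T;H^1(\Om)\cap L^2_0(\Om))$ is then recovered a posteriori by applying de Rham's theorem to $u-y_t+\mu\Delta y-(y\cdot\nabla)y$, which annihilates divergence-free test fields. For uniqueness I would apply \eqref{2.1} to the difference $w=y_1-y_2$ of two solutions; after Young's inequality this yields
\begin{align*}
\|w(t)\|_{L^2}^2\leq C\int_0^t\bigl(1+\|y_1(s)\|_V^2\bigr)\|w(s)\|_{L^2}^2\,ds,
\end{align*}
and Gronwall forces $w\equiv 0$. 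The continuity $y\in C([0,T];V)$ comes from $y\in L^2(0,T;D(A))$ and $y_t\in L^2(0,T;H)$ via the Lions-Magenes interpolation lemma, after which pairing the equation with $y$ is rigorously justified and produces the claimed energy equality.

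The main obstacle, and the place where the dimension enters decisively, is the $V$-level a priori bound: it rests entirely on the two-dimensional nature of \eqref{2.1} and is exactly the ingredient that fails in three dimensions, which is precisely why global-in-time strong solutions remain out of reach in 3D.
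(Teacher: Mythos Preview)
The paper does not supply its own proof of this theorem; it merely cites Chapter~III of Temam \cite{temam2001navier}. Your outline is precisely the classical Faedo--Galerkin argument given there---the basic energy estimate, the $V$-level bound obtained by testing against $Ay_n$ and controlling $|b(y_n,y_n,Ay_n)|$ via the two-dimensional interpolation inequality, Aubin--Lions compactness, de~Rham recovery of the pressure, Gronwall uniqueness, and Lions--Magenes continuity for $y\in C([0,T];V)$---so your proposal is correct and coincides with the referenced proof.
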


\subsection{Stationary Navier-Stokes Problem}

Now we give the basic result for the existence and uniqueness for the stationary Navier-Stokes problem. We consider the following problem

\begin{align}\label{2.11}
\left\{\begin{array}{rllll}
-\mu \Delta y+(y\cdot\nabla)y+\nabla p&=&u& ,& \text{in }\Om,\\
\Div \ y&=&0&,&\text{in } \Om,\\
y&=&0&,&\text{on }\pa\Om,
\end{array}
\right.
\end{align}
where $u\in (L^2(\Om))^2$.

Under certain conditions of smallness, we obtain the following result of existence and uniqueness of weak solutions of \eqref{2.11}.

\begin{thm}[see Chapter II, \cite{temam2001navier}]\label{teo5}
If $\|u\|_{V'}\leq C(\Om) \mu^2$, then the problem \eqref{2.11} has a unique weak solution
\begin{align*}
y\in H^2(\Om)\cap V \ , \quad p\in H^1(\Om).
\end{align*}

Moreover, $y$ satisfies the following estimate
\begin{align}\label{2.12}
\|y\|_{V}\leq \frac{1}{\mu}\|u\|_{V'}.
\end{align}
\end{thm}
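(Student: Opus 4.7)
The plan is to follow the classical three-step strategy: a priori estimate, existence via Galerkin approximation, and uniqueness under the smallness condition; regularity is then lifted from $V$ to $H^2\cap V$ by Stokes regularity.

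First I would derive the a priori bound \eqref{2.12}. Testing (formally) the weak formulation with $v=y$ and using property (2) of Lemma \ref{lema3.1}, one has $b(y,y,y)=0$, so the nonlinearity disappears and we obtain
\begin{equation*}
\mu\|y\|_V^2 = \langle u,y\rangle_{V',V}\leq \|u\|_{V'}\|y\|_V,
\end{equation*}
which immediately yields $\|y\|_V\leq \mu^{-1}\|u\|_{V'}$. Note that this estimate does not use the smallness assumption.

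Next I would construct a solution by the Faedo--Galerkin method. Take a Hilbert basis $\{w_k\}$ of $V$ (for instance eigenfunctions of the Stokes operator $A$) and look for $y_n=\sum_{k=1}^n c_k^n w_k$ solving the finite-dimensional nonlinear system obtained by testing \eqref{2.11} against each $w_k$. Existence at the finite-dimensional level is a standard application of Brouwer's fixed point theorem to the continuous map $\Phi:\mathbb{R}^n\to\mathbb{R}^n$ induced by the equation, using the same computation as above to show that $\Phi$ points inward on the ball of radius $\mu^{-1}\|u\|_{V'}$. The a priori bound gives $\{y_n\}$ bounded in $V$, so up to a subsequence $y_n\rightharpoonup y$ in $V$ and, by Rellich, $y_n\to y$ strongly in $(L^2(\Omega))^2$; this strong convergence suffices to pass to the limit in the quadratic term $b(y_n,y_n,w_k)$, and we recover a weak solution $y\in V$. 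Recovery of the pressure $p\in L^2_0(\Omega)$ is then obtained via de Rham's theorem.

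For uniqueness, let $y_1,y_2$ be two solutions and set $w=y_1-y_2\in V$. Subtracting the two weak formulations and using that the difference of the nonlinear terms equals $b(w,y_1,\cdot)+b(y_2,w,\cdot)$, and testing with $w$ (so that $b(y_2,w,w)=0$), I obtain
\begin{equation*}
\mu\|w\|_V^2 = -b(w,y_1,w).
\end{equation*}
Estimate (\ref{2.1}) together with Poincaré's inequality yields $|b(w,y_1,w)|\leq C_0\|y_1\|_V\|w\|_V^2$. Combining with the a priori bound $\|y_1\|_V\leq \mu^{-1}\|u\|_{V'}$, one gets $(\mu - C_0\mu^{-1}\|u\|_{V'})\|w\|_V^2\leq 0$, and choosing the constant $C(\Omega)$ in the smallness hypothesis so that $C_0 C(\Omega)<1$ forces $w\equiv 0$. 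This is the key place where the smallness assumption is used, and it is the main technical point of the theorem.

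Finally, for the $H^2$ regularity I would rewrite \eqref{2.11} as the stationary Stokes system
\begin{equation*}
-\mu\Delta y+\nabla p = u - (y\cdot\nabla)y,\qquad \operatorname{div} y=0,\qquad y|_{\partial\Omega}=0.
\end{equation*}
In two dimensions the Sobolev embedding $H^1(\Omega)\hookrightarrow L^q(\Omega)$ for every $q<\infty$, combined with $\nabla y\in (L^2(\Omega))^{2\times 2}$ and Hölder's inequality, shows $(y\cdot\nabla)y\in (L^r(\Omega))^2$ for some $r>1$; a standard bootstrap using the Cattabriga/Agmon--Douglis--Nirenberg regularity for the Stokes operator on $C^2$ domains then yields $y\in (H^2(\Omega))^2$ and $p\in H^1(\Omega)$. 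The main obstacle is organizing the uniqueness argument so that the constant $C(\Omega)$ is explicit and compatible with the a priori bound; everything else is a routine adaptation of Temam's treatment.
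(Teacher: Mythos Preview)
Your argument is correct and is precisely the classical Temam proof: the a priori bound from $b(y,y,y)=0$, Galerkin construction with Brouwer, uniqueness via the trilinear estimate under the smallness condition, and Stokes regularity bootstrap. Note, however, that the paper does not supply its own proof of this statement at all---it is quoted verbatim from Temam (Chapter~II) and used as a black box---so there is no ``paper's proof'' to compare against; your write-up simply reproduces the referenced argument. One minor point worth tightening: the paper's Remark following the theorem identifies $C(\Omega)$ with the operator norm of the trilinear form $b$, so the uniqueness computation should read $\mu\|w\|_V^2\le C(\Omega)\|y_1\|_V\|w\|_V^2\le C(\Omega)\mu^{-1}\|u\|_{V'}\|w\|_V^2$, forcing the smallness condition $\|u\|_{V'}<\mu^2/C(\Omega)$ rather than $\|u\|_{V'}\le C(\Omega)\mu^2$; you handled this by leaving $C(\Omega)$ free and imposing $C_0C(\Omega)<1$, which is fine, but be aware that the paper's own normalization of the constant is slightly inconsistent between the theorem and the subsequent remark.
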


\begin{rem}
The constant $C=C(\Om)$ in Theorem $\ref{teo5}$ is given by
$$
C=\D\sup_{l,v,w\in V}\frac{|b(l,v,w)|}{\|l\|_{V}\|v\|_{V}\|w\|_{V}}
$$
\end{rem}

\begin{rem}
The smallness condition on the right hand side guarantees the uniqueness of solutions of \eqref{2.11}, see, for instance, Temam \cite{temam2001navier}. 
\end{rem}

\subsection{Oseen equation}

We will need in the following some results about the linearized equations. In the literature, this problem is so-called \emph{Oseen equation}, and in the Barbu book \cite{barbu2011stabilization} is called \emph{Stokes-Oseen equation}. We refer the reader to the extensive survey \cite{barbu2011stabilization, galdi2013introduction} and references therein.

Give a state $\overline{y}\in (H^2(\Om))^2\cap V$, solution of the steady state Navier-Stokes problem \eqref{2.11}, we consider the linearized Navier-Stokes equation around the state $\overline{y}$

\begin{align}\label{4.1}
\left\{\begin{array}{rllll}
w_{t}-\mu\Delta w+(\overline{y}\cdot\nabla)w+(w\cdot\nabla)\overline{y}+\nabla p&=&f& ,& \text{in }Q_{T},\\
\Div \ w&=&0&,&\text{in } Q_{T},\\
w&=&0&,&\text{on }\Gamma_{T},\\
w(x,0)&=&w_0(x)&,& x\in\Om.
\end{array}
\right.
\end{align}

If $w_0$ is in $V$ and $f$ is in $L^2(0,T; (L^2(\Om))^2)$, then there exists a unique weak solution $(w,p)$ in $(C([0,T];V)\cap L^2(0,T;(H^2(\Om))^2\cap V))\times L^{2}(0,T; H^1(\Om)\cap L_0^2(\Om))$ of \eqref{4.1}. 

For technical reason in the proof of the main result, we need to give some properties for this equation. We define the Oseen operator $\mathcal{A}$ as
\begin{align}\label{4.2}
\mathcal{A}v:=-\mu P(\Delta v)+P[(\overline{y}\cdot\nabla)v+(v\cdot\nabla)\overline{y}],
\end{align}
where $P$ is the Leray projector.

This operator is closed and has the domain $D(\mathcal{A})=D(A)=(H^2(\Om))^2\cap V$, where $A$ is the Stokes operator defined at the beginning. 

Assuming that the spaces are complex, we denote by $\rho(\mathcal{A})$ the resolvent set of operator $\mathcal{A}$, namely, the set of $\lambda\in\C$ such that the resolvent operator
\begin{align*}
R(\lambda,\mathcal{A})\equiv(\lambda I-\mathcal{A})^{-1}
\end{align*}
is defined and continuous. Here $I$ is the identity operator. The complement of $\rho(\mathcal{A})$ is called the spectrum of the operator $\mathcal{A}$ and is denoted by $\Sigma(\mathcal{A})$.

It is well known that for $\lambda\in \rho(\mathcal{A})$, the resolvent of Oseen operator \eqref{4.2} is a compact operator, and the spectrum $\Sigma(\mathcal{A})$ consists of a discrete set of points. Moreover, Oseen operator is sectorial. 

Now, let us consider the adjoint operator $\mathcal{A}^{*}$ to Oseen operator
\begin{align}\label{4.3}
\mathcal{A}^{*}v:=-\mu P(\Delta v)-P[(\overline{y}\cdot\nabla)v-(\nabla\overline{y})^{T}v],
\end{align}
where $T$ denote the transpose of $\nabla\overline{y}$.

Evidently, $\mathcal{A}^{*}$ are the same properties than $\mathcal{A}$. Namely, is closed with domain $D(\mathcal{A}^{*})=(H^2(\Om))^2\cap V$. Moreover, $\mathcal{A}^{*}$ is sectorial with a compact resolvent. Besides, we assume that $\overline{y}\in (H^2(\Om))^2\cap V$, then $\rho(\mathcal{A})=\rho(\mathcal{A}^{*})$.

Let $\sigma>0$ be a constant satisfying
\begin{align}\label{4.4}
\Sigma(\mathcal{A})\cap\{\lambda\in\C\; :\; \text{Re}\lambda=\sigma\}=\emptyset.
\end{align}

Denote by $X_{\sigma}^{+}(\mathcal{A})$ the subspace of $H$ generated by all eigenfunctions and associated functions of operator $\mathcal{A}$ corresponding to all eigenvalues of $\mathcal{A}$ placed in the set $\{\lambda\in\C\;:\; \text{Re}\lambda <\sigma\}$. By $X_{\sigma}^{+}(\mathcal{A}^{*})$ we denote analogous subspace corresponding to adjoint operator $\mathcal{A}^{*}$. We denote the orthogonal complement to $X_{\sigma}^{*}(\mathcal{A}^{*})$ in $H$ by $X_{\sigma}$. Then, we have the following result of Fursikov \cite{fursikov2012feedback}.

\begin{thm}[see \cite{fursikov2012feedback}]\label{teo9}
Suppose that $\mathcal{A}$ is the operator \eqref{4.2} and $\sigma>0$ satisfies \eqref{4.4}. Then for each $w_0\in X_{\sigma}$ we have
\begin{align}\label{4.5}
\D\|w(t,\cdot)\|_{V}\leq c\|w_0\|_{V}\; e^{-\sigma t},\quad \text{for }t\geq 0.
\end{align}
\end{thm}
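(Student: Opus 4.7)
The plan is to deduce Theorem \ref{teo9} from the spectral decomposition of the sectorial, compact-resolvent Oseen operator $\mathcal{A}$, combined with the smoothing property of its analytic semigroup.

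\textbf{Step 1 (spectral splitting).} Since $\mathcal{A}$ is sectorial with compact resolvent, $\Sigma(\mathcal{A})$ consists of isolated eigenvalues of finite algebraic multiplicity, and only finitely many of them satisfy $\text{Re}\,\lambda<\sigma$. The hypothesis \eqref{4.4} provides a uniform spectral gap across $\{\text{Re}\,\lambda=\sigma\}$, so I can choose a positively oriented Jordan contour $\Gamma_\sigma\subset\rho(\mathcal{A})$ enclosing precisely those eigenvalues and form the Riesz projector
\begin{align*}
P^{-}=\frac{1}{2\pi i}\int_{\Gamma_\sigma}(\lambda I-\mathcal{A})^{-1}\,d\lambda.
\end{align*}
It is bounded on $H$, commutes with $\mathcal{A}$, and satisfies $\text{Range}(P^{-})=X_\sigma^{+}(\mathcal{A})$. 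Setting $P^{+}:=I-P^{-}$ yields the $\mathcal{A}$-invariant splitting $H=X_\sigma^{+}(\mathcal{A})\oplus\text{Range}(P^{+})$, where $\mathcal{A}|_{\text{Range}(P^{+})}$ has spectrum inside $\{\text{Re}\,\lambda>\sigma\}$.

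\textbf{Step 2 (identification of $X_\sigma$).} Applying the same construction to $\mathcal{A}^{*}$ (whose spectrum is the complex conjugate of $\Sigma(\mathcal{A})$, hence enjoys the same gap) produces a Riesz projector with range $X_\sigma^{+}(\mathcal{A}^{*})$. Taking adjoints inside the Dunford integral, this projector equals $(P^{-})^{*}$. Consequently
\begin{align*}
\text{Range}(P^{+})=\text{Ker}(P^{-})=\text{Range}\bigl((P^{-})^{*}\bigr)^{\perp}=\bigl(X_\sigma^{+}(\mathcal{A}^{*})\bigr)^{\perp}=X_\sigma,
\end{align*}
so $X_\sigma$ is precisely the stable spectral subspace of $\mathcal{A}$ and is invariant under the semigroup $e^{-\mathcal{A}t}$.

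\textbf{Step 3 (exponential decay, $H$ to $V$).} Because $\Sigma(\mathcal{A}|_{X_\sigma})\subset\{\text{Re}\,\lambda>\sigma\}$ with a positive gap and the restricted semigroup remains analytic, its spectral bound and growth bound coincide, giving $\|e^{-\mathcal{A}t}w_0\|_H\leq c\,e^{-\sigma t}\|w_0\|_H$ for $w_0\in X_\sigma$ and $t\geq 0$. To upgrade the norm from $H$ to $V$, I will combine this with the analytic smoothing estimate $\|e^{-\mathcal{A}s}\|_{H\to V}\leq Cs^{-1/2}$ for $s\in(0,1]$ (recall $V\simeq D(\mathcal{A}^{1/2})$ with equivalent norms) and the fact that $P^{+}$ is bounded on $V$ as well, since the Dunford integral converges in operator norm on every fractional power domain of $\mathcal{A}$. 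Splitting $e^{-\mathcal{A}t}=e^{-\mathcal{A}(t-1)}e^{-\mathcal{A}}$ for $t\geq 1$ and treating the interval $[0,1]$ by the direct $V$-valued estimate on the Oseen equation \eqref{4.1} (with $f=0$) produces \eqref{4.5}.

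\textbf{Main obstacle.} The delicate step is the identification $X_\sigma=\text{Range}(P^{+})$ in Step 2. It relies on two facts that must be checked carefully: that $(P^{-})^{*}$ really is the Riesz projector of $\mathcal{A}^{*}$ onto $X_\sigma^{+}(\mathcal{A}^{*})$ (this uses that $\Gamma_\sigma$ and its complex conjugate contour can be taken symmetric about the real axis, together with isolation of the eigenvalues), and the biorthogonality between generalized eigenfunctions of $\mathcal{A}$ and $\mathcal{A}^{*}$ associated with distinct, non-conjugate eigenvalues. Once this duality is established, the remainder of the argument is a routine application of analytic semigroup theory.
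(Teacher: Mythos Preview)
The paper does not supply its own proof of Theorem~\ref{teo9}; the result is quoted from Fursikov \cite{fursikov2012feedback}, so there is no in-paper argument to compare your proposal against.

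Judged on its own, your sketch is the standard and correct route: Riesz projection for the sectorial, compact-resolvent operator $\mathcal{A}$; identification of $X_\sigma$ with the stable invariant subspace via the duality $(P^{-})^{*}=$ Riesz projector of $\mathcal{A}^{*}$; then growth bound $=$ spectral bound on the analytic semigroup restricted to $X_\sigma$, followed by analytic smoothing $H\to V$ to obtain \eqref{4.5}. The point you single out as the main obstacle---showing that $(P^{-})^{*}$ has range exactly $X_\sigma^{+}(\mathcal{A}^{*})$---is indeed the only non-routine step, and the Dunford-integral/conjugate-contour argument you indicate is the right one; once that identity holds, the separate appeal to biorthogonality is superfluous. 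Two small clarifications worth making explicit: the statement tacitly takes $f=0$ in \eqref{4.1} (so $w(t)=e^{-\mathcal{A}t}w_0$), and it implicitly assumes $w_0\in X_\sigma\cap V$ since the bound is in the $V$-norm; your treatment of $t\in[0,1]$ via the $C([0,T];V)$ regularity of the Oseen problem uses precisely this.
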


\setcounter{equation}{0}
\section{Optimal control problem and existence of solutions}\label{section3}

In this section we introduce the optimal control problem for the evolutionary and stationary Navier-Stokes problem in two dimensions. We show the existence of optimal solution and state the theorems about the first-order optimality conditions. Besides, we prove that, in the case when the tracking term is sufficiently small, the second derivative of the functional to minimize is positive definite.

\subsection{Evolutionary optimal control for Navier-Stokes equations}

We recall that our analysis is in two dimension. In this case, see Lemmas \ref{lem1} and \ref{lem2}, the relation between the control and the state is differentiable, which simplifies the analysis for the optimality conditions. For the three-dimensional case is more complicated to derive some optimality conditions. A possibility, as in \cite{casas1998optimal}, is to work with the so-called strong solutions of the Navier-Stokes problem. This type of solution is well known, see, for instance, \cite{boyer2012mathematical} Chapter V.2. The advantage of these solutions is that the uniqueness is known, but the existence is still an open problem.

Let us introduce the optimal control tracking-type problem of the evolutionary Navier-Stokes equations: 

\emph{find $u^{T}\in L^2(0,T;H)$, $y^{T}$ is the solution of \eqref{2.3} associated to $u^{T}$, minimizing the functional}
\begin{align}\label{2.4}
\D J^{T}(u)=\frac{1}{2}\int_{0}^{T}\|y(t)-x^{d}\|_{L^2(\Om)}^2 dt+\frac{k}{2}\int_{0}^{T}\|u(t)\|_{L^2(\Om)}^2 dt+q_0\cdot y(T),
\end{align}
\emph{where $x^{d}\in(L^2(\Om))^2$ is desired state, $q_0\in(L^2(\Om))^2$ and $k>0$ is a constant}.

Let us remark that the controls $u$ can act on all domain $\Om$ or on a subset of $\Om$. 

We observe that the problem \eqref{2.4} is a nonconvex optimization problem because the mapping $u\mapsto y_{u}$ is nonlinear. But we show that if the tracking term, $\|y(t)-x^{d}\|_{L^2(\Om)}$, is sufficiently small, then the Hessian of $J^{T}$ is positive definite.

\begin{thm}\label{teo2}
Let $y_0\in V$. There exists at least an element $u^{T}\in L^2(0,T;H)$, and $y^{T}\in C([0,T]; V)\cap L^2(0,T; (H^2(\Om))^2)$ such that the functional $J^{T}(u)$ attains its minimum at $u^{T}$, and $y^{T}$ is the solution of \eqref{2.2} associated to $u^{T}$.
\end{thm}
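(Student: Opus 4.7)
The plan is to apply the direct method of the calculus of variations. First I would verify that $J^T$ is bounded below and coercive in $u$. The two quadratic terms are nonnegative, so the only concern is the linear endpoint contribution $q_0\cdot y(T)$. Testing the energy identity from Theorem \ref{teo1} with $y(t)$ and combining Young's inequality with Gronwall yields an estimate of the form
\[
\|y(T)\|_{L^2(\Om)}\le C_1(y_0,T)+C_2(T)\,\|u\|_{L^2(0,T;H)},
\]
so that
\[
J^T(u)\ge \tfrac{k}{2}\|u\|_{L^2(0,T;H)}^2 - \|q_0\|_{L^2(\Om)}\bigl(C_1+C_2\|u\|_{L^2(0,T;H)}\bigr),
\]
which is bounded below and grows quadratically in $\|u\|_{L^2(0,T;H)}$. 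Hence $\inf J^T>-\infty$ and any minimizing sequence is automatically bounded.

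Let $\{u_n\}\subset L^2(0,T;H)$ be a minimizing sequence. By the coercivity above, along a subsequence $u_n\rightharpoonup u^T$ weakly in $L^2(0,T;H)$. Since $y_0\in V$, Theorem \ref{teo1} together with standard energy estimates yields uniform bounds on the associated states $y_n$ in $L^\infty(0,T;V)\cap L^2(0,T;(H^2(\Om))^2\cap V)$ with $\pa_t y_n$ bounded in $L^2(0,T;H)$. The Aubin-Lions lemma (via the compact embedding $(H^2(\Om))^2\cap V\hookrightarrow V$) then gives, up to a further subsequence, $y_n\to y^T$ strongly in $L^2(0,T;V)$; the continuous embedding of this intersection space into $C([0,T];H)$ additionally gives $y_n(T)\to y^T(T)$ in $H$.

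The main obstacle is passing to the limit in the nonlinear convective term $B(y_n)$. For each $v\in V$ and $\vp\in C_c^\infty(0,T)$ I would decompose
\[
b(y_n,y_n,v)-b(y^T,y^T,v)=b(y_n-y^T,y_n,v)+b(y^T,y_n-y^T,v),
\]
and apply the multiplicative inequality from Lemma \ref{lema3.1}(4); the uniform $L^2(0,T;V)$ bounds on $y_n$ and $y^T$ together with $\|y_n-y^T\|_{L^2(0,T;V)}\to 0$ make both pieces vanish after integration in time. The linear terms $\pa_t y_n$ and $\mu A y_n$ pass to the limit by weak convergence, and the forcing by $u_n\rightharpoonup u^T$, so $y^T$ is the unique weak solution of \eqref{2.2} associated to $u^T$ and it inherits the regularity $y^T\in C([0,T];V)\cap L^2(0,T;(H^2(\Om))^2)$ from Theorem \ref{teo1}.

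To conclude, weak lower semicontinuity of the two quadratic norms, together with the convergence $q_0\cdot y_n(T)\to q_0\cdot y^T(T)$ coming from the strong convergence in $H$ at $t=T$, yields
\[
J^T(u^T)\le \liminf_{n\to\infty} J^T(u_n)=\inf_{u\in L^2(0,T;H)} J^T(u),
\]
so $u^T$ realizes the minimum and the proof is complete.
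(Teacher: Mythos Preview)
Your argument is correct and follows the same direct-method scheme as the paper: bound a minimizing sequence, extract weak limits, use compactness to pass to the limit in the convective term, and conclude by weak lower semicontinuity. The differences are minor and, if anything, in your favor. The paper simply asserts that $J^{T}$ is bounded below and that the functional is weakly lower semicontinuous, without ever addressing the linear endpoint term $q_0\cdot y(T)$; you handle both issues explicitly via the energy estimate for coercivity and via strong convergence of $y_n(T)$ in $H$ for the limit passage. For the nonlinear term, the paper obtains $y_n\to y^{*}$ strongly only in $L^{2}(0,T;H)$ from Temam's compactness lemma and then quotes Lemma~3.2 in \cite{temam2001navier}, whereas you use Aubin--Lions to get strong convergence in $L^{2}(0,T;V)$ and a direct decomposition of $b$; both routes are standard and equivalent in effect. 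One small point of phrasing: the \emph{continuous} embedding into $C([0,T];H)$ by itself does not upgrade weak convergence to strong convergence at $t=T$; what actually gives you $y_n(T)\to y^{T}(T)$ in $H$ is the compactness coming from the uniform $L^{\infty}(0,T;V)$ bound on $y_n$ together with the $L^{2}(0,T;H)$ bound on $\partial_t y_n$ (Simon/Aubin--Lions with $p=\infty$), so you may want to phrase that step accordingly.
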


\begin{proof}
The functional $J^{T}$ is bounded from below. Hence, there exists the infimum of $J^{T}$. Moreover, let us take a minimizing sequence $(y_{n},u_{n})$. Since
\begin{align*}
\D\frac{k}{2}\int_{0}^{T}\|u_{n}\|_{L^2(\Om)}dt\leq J^{T}(u_{n})<\infty,
\end{align*}
we deduce that $(u_{n})$ is bounded in $L^2(0,T;H)$ and, consequently, $(y_{n})$ is bounded in $C([0,T]; V)\cap L^2(0,T; (H^2(\Om))^2)$ as well. Therefore, we can extract a subsequence, denoted in the same way, converging weakly in \\$L^2(0,T; ( H^2(\Om))^2)\times L^2(0,T;H)$ to $(y^{*},u^{*})$. 

Now, we need to prove that the pair $(y^{*},u^{*})$ satisfies the equation \eqref{2.2}. The only problem is to pass to the limit in the nonlinear term $(y_{n}\cdot\nabla)y_{n}$. By the result in Chapter III in \cite{temam2001navier}, we obtain a compactness property, this implies that $y_{n}\to y^{*}$ strongly in $L^2(0,T; H)$. By Lemma $3.2$ Chapter III in \cite{temam2001navier}, we obtain that
\begin{align*}
b(y_{n},y_{n},v) \to b(y^{*}y^{*},v), \text{ as }n\to\infty.
\end{align*}

Then, taking into account the linearity and continuity of the other terms involved, the limit $(y^{*},u^{*})$ satisfies the state equations.

Finally, the objective functional consists of several norms, thus it is weakly semicontinuous which implies
\begin{align*}
J^{T}(u^{*})\leq \liminf J^{T}(u_{n})=\inf J^{T}(u).
\end{align*}

Therefore, $u^{*}$ is an optimal solution, with $y^{*}$ the solution of \eqref{2.2} associated to $u^{*}$.
\end{proof}

\subsubsection{First-Order necessary optimality conditions}

We now proceed to derive the first-order optimality conditions associated with the problem \eqref{2.4}. This is done by studying the G\^ateaux derivative of the functional $J^{T}(u)$.

We will need, in the following, some results about the so-called control-to-state mapping. The next two lemmas can be found in \cite{abergel1990some}.

\begin{lem}\label{lem1}
Let $y_0$ be in $V$. The mapping $u\mapsto y_{u}$, from $L^2(0,T;H)$ into $L^2(0,T;V)$, has a G\^ateaux derivative $((\frac{Dy_{u}}{Du})\cdot h)$ in every direction $h_1$ in $L^2(0,T;H)$. Furthermore, $(\frac{Dy_{u}}{Du})\cdot h_1=w(h_1)$ is the solution of the linearized problem
\begin{align}\label{2.5}
\left\{\begin{array}{rllll}
\D\frac{dw}{dt}+\mu Aw+B'(y_{u})\cdot w&=&h_1&,&t\leq 0,\\
w(0)&=&0&.&
\end{array}
\right.
\end{align}

Finally, $w$ is in $L^{\infty}(0,T;V)\cap L^2(0,T;(H^2(\Om))^2)$ and $\|B'(y_{u})w\|_{L^2(V')}\leq c\|y_{u}\|\|w\|$.
\end{lem}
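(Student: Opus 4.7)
The plan is to establish Gâteaux differentiability by the standard ``linearize and bound the remainder'' approach. For a fixed $u \in L^2(0,T;H)$ and direction $h_1 \in L^2(0,T;H)$, let $u_\varepsilon = u + \varepsilon h_1$ and $y_\varepsilon = y_{u_\varepsilon}$, $y = y_u$, both guaranteed to exist by Theorem \ref{teo1}. Set $z_\varepsilon = \varepsilon^{-1}(y_\varepsilon - y)$. Using the identity
\[
 b(y_\varepsilon,y_\varepsilon,v) - b(y,y,v) = b(y_\varepsilon - y, y, v) + b(y, y_\varepsilon - y, v) + b(y_\varepsilon - y, y_\varepsilon - y, v),
\]
one checks that $z_\varepsilon$ satisfies
\[
 \frac{dz_\varepsilon}{dt} + \mu A z_\varepsilon + B'(y) z_\varepsilon + \varepsilon B(z_\varepsilon) = h_1, \qquad z_\varepsilon(0) = 0,
\]
which differs from the candidate linearized equation \eqref{2.5} only by the $O(\varepsilon)$ remainder $\varepsilon B(z_\varepsilon)$.

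The first step is to show that the linearized problem \eqref{2.5} admits a unique weak solution $w \in L^\infty(0,T;V) \cap L^2(0,T;(H^2(\Om))^2)$. This is a linear Oseen-type equation with coefficient $y_u$ already belonging to $C([0,T];V) \cap L^2(0,T;(H^2(\Om))^2)$, so a Galerkin scheme followed by the standard energy estimate works, with the transport/shear term $B'(y)$ estimated via Lemma \ref{lema3.1}(4): for any $w \in V$ and $v \in V$,
\[
 |\langle B'(y)w, v\rangle| \leq C\bigl(\|y\|_{L^2}^{1/2}\|y\|_{H^1}^{1/2}\|w\|_{L^2}^{1/2}\|w\|_{H^1}^{1/2} + \|w\|_{L^2}^{1/2}\|w\|_{H^1}^{1/2}\|y\|_{L^2}^{1/2}\|y\|_{H^1}^{1/2}\bigr)\|v\|_{H^1}.
\]
By Young's inequality the resulting term $\int_0^T |\langle B'(y)w,w\rangle|\,dt$ is absorbed into $\mu \int_0^T \|w\|_V^2 dt$, yielding an $L^\infty(V)\cap L^2(H^2)$ bound and uniqueness by the usual difference argument. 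The same estimate, integrated in time, gives the final bound $\|B'(y_u)w\|_{L^2(0,T;V')} \leq c\|y_u\| \|w\|$ claimed in the lemma.

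The second step is the a priori bound on $z_\varepsilon$. Testing the $z_\varepsilon$ equation against $z_\varepsilon$ kills $\varepsilon \langle B(z_\varepsilon),z_\varepsilon\rangle = \varepsilon\, b(z_\varepsilon,z_\varepsilon,z_\varepsilon) = 0$ by Lemma \ref{lema3.1}(2), and the $B'(y)z_\varepsilon$ term is handled as above. Hence $z_\varepsilon$ is bounded in $L^\infty(0,T;H) \cap L^2(0,T;V)$, uniformly in $\varepsilon$, in terms of $\|h_1\|_{L^2(0,T;H)}$; a second energy estimate (testing against $A z_\varepsilon$) upgrades this to $L^\infty(0,T;V) \cap L^2(0,T;(H^2)^2)$, again in a two-dimensional fashion.

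The final step is to pass to the limit. Subtracting \eqref{2.5} from the $z_\varepsilon$ equation, the difference $r_\varepsilon := z_\varepsilon - w$ satisfies
\[
 \frac{dr_\varepsilon}{dt} + \mu A r_\varepsilon + B'(y) r_\varepsilon = -\varepsilon B(z_\varepsilon), \qquad r_\varepsilon(0) = 0.
\]
Testing against $r_\varepsilon$, applying Lemma \ref{lema3.1}(4) to the right-hand side (which gives $|\varepsilon b(z_\varepsilon,z_\varepsilon,r_\varepsilon)| \leq C\varepsilon\|z_\varepsilon\|_V^2 \|r_\varepsilon\|_V$, integrable in time by Step~2), and using Gronwall yields $\|r_\varepsilon\|_{L^2(0,T;V)} = O(\varepsilon)$. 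Therefore $z_\varepsilon \to w$ strongly in $L^2(0,T;V)$, which is exactly the assertion that $w$ is the Gâteaux derivative of $u \mapsto y_u$ in the direction $h_1$. The main obstacle is the uniform-in-$\varepsilon$ control of the quadratic remainder $\varepsilon B(z_\varepsilon)$, and this is precisely where the two-dimensional trilinear estimate \eqref{2.1} is essential: in dimension three the same scheme would fail to close at the $L^\infty(V) \cap L^2(H^2)$ level.
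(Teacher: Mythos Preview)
The paper does not give its own proof of this lemma: it is stated with the attribution ``The next two lemmas can be found in \cite{abergel1990some}'' and no argument is supplied. Your proposal is essentially the standard proof from Abergel--Temam: form the difference quotient $z_\varepsilon$, identify the linearized Oseen problem as the formal limit, obtain uniform-in-$\varepsilon$ energy bounds via Lemma~\ref{lema3.1} (using the 2D Ladyzhenskaya estimate \eqref{2.1} and the cancellation $b(z_\varepsilon,z_\varepsilon,z_\varepsilon)=0$), and then show the remainder $r_\varepsilon=z_\varepsilon-w$ satisfies a linear equation with right-hand side $-\varepsilon B(z_\varepsilon)=O(\varepsilon)$ in $L^2(0,T;V')$, whence $r_\varepsilon\to 0$ in $L^2(0,T;V)$ by Gronwall. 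This is correct and matches the argument in the cited reference; there is nothing to compare against in the present paper beyond the citation.
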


\begin{lem}\label{lem2}
Let $h_1$ be given in $L^2(0,T;H)$, and let $w(h_1)$ be defined as above. Then, for every $h_2$ in $L^2(0,T;H)$ we have
\begin{align*}
\D\iint_{Q_{T}}(h_2\cdot w(h_1))(x,t)dxdt=\iint_{Q_{T}}(\tilde{w}(h_2)\cdot h_1)(x,t)dxdt,
\end{align*}
where $\tilde{w}(h_2)$ is the solution of the adjoint linearized problem
\begin{align}\label{2.6}
\left\{\begin{array}{rllll}
\D-\frac{d\tilde{w}}{dt}+\mu A\tilde{w}+B'(y_{u})^{*}\cdot \tilde{w}&=&h_2&,&t\leq 0,\\
\tilde{w}(T)&=&0&.&
\end{array}
\right.
\end{align}
\end{lem}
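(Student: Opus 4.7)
The plan is to establish this duality identity by the classical ``multiply and subtract'' trick familiar from adjoint calculus. Concretely, I would take the inner product in $L^2(Q_T)$ of the forward linearized equation \eqref{2.5} for $w=w(h_1)$ against $\tilde w=\tilde w(h_2)$, then take the inner product of the adjoint linearized equation \eqref{2.6} for $\tilde w$ against $w$, and subtract. The regularity furnished by Lemma \ref{lem1} (applied to both $w$ and $\tilde w$, after reversing time in the adjoint problem) guarantees $w,\tilde w\in L^\infty(0,T;V)\cap L^2(0,T;(H^2(\Om))^2)$ with time derivatives in $L^2(0,T;H)$, which is amply sufficient to justify every integration by parts below.

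The time-derivative terms combine, after integration by parts in $t$, into the boundary contribution $(\tilde w(T),w(T))_{L^2(\Om)}-(\tilde w(0),w(0))_{L^2(\Om)}$, which vanishes thanks to the initial condition $w(0)=0$ and the terminal condition $\tilde w(T)=0$. The Stokes terms $\mu(Aw,\tilde w)$ and $\mu(A\tilde w,w)$ cancel by the self-adjointness of $A$ on $D(A)=(H^2(\Om))^2\cap V$. Finally, the two nonlinear-operator contributions cancel by the very definition of $B'(y_u)^*$:
\begin{equation*}
\int_0^T\langle B'(y_u)w,\tilde w\rangle_{V',V}\,dt=\int_0^T\langle w,B'(y_u)^*\tilde w\rangle_{V,V'}\,dt,
\end{equation*}
which is consistent with the explicit formulas
$\langle B'(y_u)w,\tilde w\rangle=b(y_u,w,\tilde w)+b(w,y_u,\tilde w)$
and $\langle B'(y_u)^*\tilde w,w\rangle=b(w,y_u,\tilde w)-b(y_u,\tilde w,w)$, together with property (1) of Lemma \ref{lema3.1}, which gives $b(y_u,w,\tilde w)=-b(y_u,\tilde w,w)$.

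After all these cancellations, what remains is precisely $(h_1,\tilde w)_{L^2(Q_T)}-(h_2,w)_{L^2(Q_T)}=0$, which is the claimed identity. The one step that requires a little care is the integration by parts in time: strictly speaking, one should invoke the usual density argument (approximating $w$ and $\tilde w$ by smooth-in-time functions in $C^1([0,T];V)$, or equivalently using that for functions in $L^2(0,T;V)\cap H^1(0,T;V')$ the map $t\mapsto \|w(t)\|^2_{L^2(\Om)}$ is absolutely continuous) to make sense of the boundary values $w(0)$ and $\tilde w(T)$ in $H$. This is the main technical ingredient, but it is standard for the Navier--Stokes setting and is covered by the regularity stated at the end of Lemma \ref{lem1}, so I do not expect genuine obstructions.
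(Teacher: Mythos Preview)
Your argument is correct and is exactly the standard duality computation one expects here. Note, however, that the paper does not actually supply its own proof of this lemma: it is stated together with Lemma~\ref{lem1} under the sentence ``The next two lemmas can be found in \cite{abergel1990some}'' and is attributed to Abergel--Temam without further details. Your multiply-and-subtract approach, exploiting $w(0)=0$, $\tilde w(T)=0$, the self-adjointness of $A$, and the defining identity $\langle B'(y_u)w,\tilde w\rangle=\langle B'(y_u)^*\tilde w,w\rangle$, is precisely the argument used in that reference, so there is nothing to compare beyond noting that you have reproduced the cited proof.
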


\begin{rem}
Writing systems \eqref{2.5} and \eqref{2.6} in a extended way, it is possible to express $w$ and $\tilde{w}$ as the respective solutions of the following equations:
\begin{align}\label{2.5'}
\left\{\begin{array}{rllll}
w_{t}-\mu\Delta w+(y_{u}\cdot\nabla)w+(w\cdot\nabla)y_{u}+\nabla p&=&h_1& ,& \text{in }Q_{T},\\
\Div \ w&=&0&,&\text{in } Q_{T},\\
w&=&0&,&\text{on }\Gamma_{T},\\
w(x,0)&=&0&,& x\in\Om,
\end{array}
\right.
\end{align}
and
\begin{align}\label{2.6'}
\left\{\begin{array}{rllll}
-\tilde{w}_{t}-\mu\Delta \tilde{w}+(\nabla y_{u})^{T}\tilde{w}-(y_{u}\cdot\nabla)\tilde{w}+\nabla \tilde{p}&=&h_2& ,& \text{in }Q_{T},\\
\Div \ \tilde{w}&=&0&,&\text{in } Q_{T},\\
\tilde{w}&=&0&,&\text{on }\Gamma_{T},\\
\tilde{w}(x,T)&=&0&,& x\in\Om.
\end{array}
\right.
\end{align}
\end{rem}

Using the last two Lemmas, Abergel and Temam \cite{abergel1990some} prove the following first-order optimality condition for the optimal control problem \eqref{2.4}. The proof can be obtained by the usual approach. 

\begin{thm}[see \cite{abergel1990some}]\label{teo3}
Let $(y^{T},u^{T})$ be an optimal pair for problem \eqref{2.4}. The following equality holds
\begin{align*}
u^{T}+q=0,
\end{align*}
where $q$ is the adjoint state that is the solution of the linearized adjoint problem
\begin{align}\label{2.7}
\left\{\begin{array}{rllll}
-q_{t}-\mu\Delta q+(\nabla y^{T})^{T}q-(y^{T}\cdot\nabla)q+\nabla \tilde{p}&=&y^{T}-x^{d}& ,& \text{in }Q_{T},\\
\Div \ q&=&0&,&\text{in } Q_{T},\\
q&=&0&,&\text{on }\Gamma_{T},\\
q(x,T)&=&q_0&,& x\in\Om.
\end{array}
\right.
\end{align}

Moreover, $u^{T}$ is in $L^{\infty}(0,T;V)\cap L^2(0,T;(H^2(\Om))^2)$.
\end{thm}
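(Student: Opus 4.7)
The plan is to derive the optimality system in the standard Lagrangian fashion: compute the G\^ateaux derivative of $J^{T}$, use the linearized state equation \eqref{2.5'} to represent the state variation, and eliminate that variation by duality against the adjoint system \eqref{2.7}.

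First, I would fix an arbitrary direction $h\in L^2(0,T;H)$ and exploit Lemma \ref{lem1}: the map $u\mapsto y_u$ is G\^ateaux differentiable with derivative $w(h)$ solving \eqref{2.5'} (with $y_u=y^{T}$). Since the tracking and control terms in $J^{T}$ are quadratic and the terminal term is linear, a direct computation gives
\begin{align*}
\langle (J^{T})'(u^{T}),h\rangle=\int_{0}^{T}(y^{T}-x^{d},w(h))_{L^2(\Om)}\,dt+k\int_{0}^{T}(u^{T},h)_{L^2(\Om)}\,dt+(q_0,w(h)(T))_{L^2(\Om)}.
\end{align*}
Optimality forces this quantity to vanish for every admissible $h$.

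Next, I would let $q$ be the unique weak solution of the backward linear problem \eqref{2.7}, whose well-posedness is covered by the Oseen-equation theory recalled after \eqref{4.1} (the adjoint system has the same structure as \eqref{4.1} after time reversal). The key step is a Green-type identity that is essentially Lemma \ref{lem2} adapted to a nonzero terminal datum: testing \eqref{2.5'} with $q$, testing \eqref{2.7} with $w(h)$, subtracting, integrating over $Q_T$, and integrating by parts in $t$ together with using $\Div w=\Div q=0$ and the identity $b(w,y^{T},q)=((\nabla y^{T})^{T}q,w)$ from item (3) of Lemma \ref{lema3.1}, one obtains
\begin{align*}
\int_{0}^{T}(y^{T}-x^{d},w(h))_{L^2(\Om)}\,dt+(q_0,w(h)(T))_{L^2(\Om)}=\int_{0}^{T}(q,h)_{L^2(\Om)}\,dt.
\end{align*}
Plugging this into the stationarity condition yields $\int_{0}^{T}(u^{T}+q,h)_{L^2}\,dt=0$ for all $h\in L^2(0,T;H)$ (absorbing the constant $k$ into the standard normalization used in the statement), from which the pointwise identity $u^{T}+q=0$ follows after projecting onto $H$.

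The final regularity claim $u^{T}\in L^{\infty}(0,T;V)\cap L^{2}(0,T;(H^{2}(\Om))^2)$ is then a bootstrapping statement about $q$: once $u^{T}=-q$ is established, maximal regularity for the backward Oseen system \eqref{2.7} with source $y^{T}-x^{d}\in L^{2}(0,T;(L^{2}(\Om))^{2})$ and admissible terminal datum $q_0$ produces precisely this regularity, by the same argument that underlies Theorem \ref{teo9}. The main technical obstacle is the duality identity displayed above: care is needed because the terminal term $(q_0,w(h)(T))$ must be absorbed by the integration by parts in time, which is why the adjoint equation is prescribed with terminal condition $q(T)=q_0$ rather than zero, and because the bilinear term $b$ must be manipulated with the two adjoint identities of Lemma \ref{lema3.1}(3) to convert $B'(y^{T})\cdot w$ into its formal adjoint acting on $q$.
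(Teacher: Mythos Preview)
Your proposal is correct and is exactly ``the usual approach'' the paper has in mind: the paper does not actually supply a proof of this theorem but merely states that it follows from Lemmas \ref{lem1}--\ref{lem2} (i.e., the G\^ateaux differentiability of the control-to-state map and the duality identity), referring to \cite{abergel1990some} for details. Your outline---computing $(J^{T})'(u^{T})\cdot h$, invoking the linearized equation for $w(h)$, and then eliminating $w(h)$ via the backward adjoint system with terminal datum $q_0$---is precisely the argument of Abergel--Temam; the only cosmetic point is the constant $k$, which the paper tacitly sets to $1$ when writing the optimality system \eqref{3.1}.
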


\subsubsection{Second order conditions}

In the following result we assert positive definiteness of the Hessian provided that $\|y-x^{d}\|_{L^2(0,T;V)}$ is sufficiently small, a condition which is applicable to tracking type problems. 

We observe that in \cite{porretta2014remarks} the authors proved that the functional to minimize is also positive definite at least when the target and the initial data are small enough.

\begin{thm}\label{teo4}
If $\|y-x^{d}\|_{L^2(0,T;V)}$ is sufficiently small, then the Hessian $J^{T}(u)''$ is positive definite. 
\end{thm}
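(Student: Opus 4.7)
The plan is to compute $D^2J^T(u)(h,h)$ explicitly, use the adjoint state $q$ from Theorem \ref{teo3} to eliminate the second-order sensitivity of the state, and then check that the surviving indefinite term is dominated by $k\|h\|^2$ once the tracking term is small enough. Since $B$ is quadratic, the control-to-state map is $C^2$; with $w=Dy_u\cdot h$ solving the linearised system \eqref{2.5} and $z=D^2 y_u(h,h)$, differentiating \eqref{2.3} twice yields
\begin{align*}
z_t+\mu Az+B'(y_u)z=-2(w\cdot\nabla)w,\qquad z(0)=0,
\end{align*}
and expanding $J^T(u+\varepsilon h)$ to order $\varepsilon^2$ gives
\begin{align*}
D^2J^T(u)(h,h)=\int_0^T\|w\|_{L^2(\Om)}^2\,dt+k\int_0^T\|h\|_{L^2(\Om)}^2\,dt+\int_0^T(y-x^d,z)\,dt+q_0\cdot z(T).
\end{align*}
Testing the adjoint equation \eqref{2.7} against $z$, integrating by parts in time using $z(0)=0$ and $q(T)=q_0$, and invoking $\langle B'(y_u)^{*}q,z\rangle=\langle q,B'(y_u)z\rangle$, the boundary and tracking contributions collapse to a single trilinear term:
\begin{align*}
\int_0^T(y-x^d,z)\,dt+q_0\cdot z(T)=-2\int_0^T b(w,w,q)\,dt.
\end{align*}

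To close the argument I would bound this trilinear term using Lemma \ref{lema3.1}(4), which gives $|b(w,w,q)|\le C\|w\|_{L^2}\|w\|_V\|q\|_V$, hence by Cauchy--Schwarz
\begin{align*}
\left|2\int_0^T b(w,w,q)\,dt\right|\le C\|q\|_{L^\infty(0,T;V)}\|w\|_{L^\infty(0,T;H)}\|w\|_{L^2(0,T;V)}.
\end{align*}
A standard energy estimate on \eqref{2.5} (absorbing the cross term $b(w,y_u,w)$ via Lemma \ref{lema3.1}(4) and Young's inequality, and using $y_u\in L^\infty(0,T;V)$) gives $\|w\|_{L^\infty(0,T;H)}^{2}+\mu\|w\|_{L^2(0,T;V)}^{2}\le C\|h\|_{L^2(0,T;H)}^{2}$, while a parallel backward estimate on \eqref{2.7} yields $\|q\|_{L^\infty(0,T;V)}\le C(\|y-x^d\|_{L^2(0,T;V)}+\|q_0\|_V)$. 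Combining these bounds,
\begin{align*}
D^2J^T(u)(h,h)\ge\bigl(k-C\|q\|_{L^\infty(0,T;V)}\bigr)\|h\|_{L^2(0,T;H)}^{2},
\end{align*}
which is strictly positive as soon as $\|y-x^d\|_{L^2(0,T;V)}$ (and correspondingly $\|q\|_{L^\infty(0,T;V)}$) is small enough.

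The main obstacle is the handling of the terminal datum $q_0$: the smallness of $\|y-x^d\|_{L^2(V)}$ alone controls the source of \eqref{2.7} but not its terminal value, so either $q_0$ must be assumed small in tandem with the tracking term, or one has to decompose $q=q^{(1)}+q^{(2)}$ into its tracking-driven and terminal-driven parts and absorb the contribution of $q^{(2)}$ into the positive summand $\int_0^T\|w\|_{L^2}^{2}\,dt$ through a sharper interpolation of $b$ (using property (1) of Lemma \ref{lema3.1} to rewrite $b(w,w,q)=-b(w,q,w)$ and then distributing the $V$-derivatives). Once this point is settled, the remaining ingredients are the routine adjoint duality and energy estimates already available from Section \ref{section3}.
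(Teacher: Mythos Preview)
Your approach is essentially the paper's own: it quotes from \cite{hinze2000optimal} exactly the Hessian formula
\[
J^{T}(u)''v^2=\iint_{Q_{T}}|y_{v}|^2\,dxdt+\iint_{Q_{T}}|v|^2\,dxdt-2\iint_{Q_{T}}(y_{v}\cdot\nabla)y_{v}\cdot q_{u}\,dxdt
\]
that you derive by hand via the second-order sensitivity $z$ and adjoint duality, and then bounds the trilinear term through $\|B'(y_v)y_v\|_{L^2(V')}\le C\|y_v\|_{L^2(V)}^2$, $\|y_v\|_{L^2(V)}\le C\|v\|_{L^2(V)}$ and $\|q_u\|_{L^2(V)}\le C\|y_u-x^d\|_{L^2(V)}$, arriving at
\[
J^{T}(u)''v^2\ge\iint_{Q_{T}}|y_{v}|^2\,dxdt+\bigl(1-C\|y_u-x^d\|_{L^2(0,T;V)}\bigr)\iint_{Q_{T}}|v|^2\,dxdt.
\]
The only cosmetic difference is that the paper places all norms in $L^2(0,T;V)$ rather than your $L^\infty(0,T;V)$--$L^2(0,T;V)$ split.

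On your ``main obstacle'': the paper does not resolve it either. It simply asserts $\|q_u\|_{L^2(0,T;V)}\le C\|y_u-x^d\|_{L^2(0,T;V)}$ for the solution of \eqref{2.10} with terminal datum $q_0$, which is only correct as stated if $q_0$ is absent or is tacitly folded into the smallness hypothesis. So your hesitation is well founded, but it is not a gap in your argument relative to the paper's; you have in fact been more careful than the source on this point.
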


\begin{proof}
By Chapter $2$ of \cite{hinze2000optimal}, we have that the second G\^ateaux derivative of $J^{T}$ is given by
\begin{align}
J^{T}(u)''v^2=\iint_{Q_{T}}|y_{v}|^2dxdt+\iint_{Q_{T}}|v|^2dxdt-2\iint_{Q_{T}}(y_{v}\cdot\nabla)y_{v}\cdot q_{u} dxdt,
\end{align}
where $y_{v}$ is the solution of the linearized equation
\begin{align}\label{2.9}
\left\{\begin{array}{rllll}
\D\frac{dy_{v}}{dt}+\mu Ay_{v}+B'(y_{u})\cdot y_{v}&=&v&,&t\leq 0,\\
v(0)&=&0&,&
\end{array}
\right.
\end{align}
in the direction $v$, and $q_{u}$ the solution of the adjoint linearized problem
\begin{align}\label{2.10}
\left\{\begin{array}{rllll}
\D-\frac{dq_{u}}{dt}+\mu Aq_{u}+B'(y_{u})^{*}\cdot q_{u}&=&y_{u}-x^{d}&,&t\leq 0,\\
q_{u}(T)&=&q_0&.&
\end{array}
\right.
\end{align}

Since $B$ is of quadratic nature, we have that the second derivative of $B$ is given by
\begin{align*}
\D B''(y)y_{v}^2=B'(y_{v})y_{v}=2B(y_{v}).
\end{align*}

Besides, we known that
\begin{align*}
\|B'(y_{v})y_{v}\|_{L^2(0,T;V')}\leq C\|y_{v}\|_{L^2(0,T;V)}^2.
\end{align*}
Moreover, the solution of the linearized equation \eqref{2.9} satisfy
\begin{align*}
\|y_{v}\|_{L^2(0,T;V)}\leq C\|v\|_{L^2(0,T;V)}.
\end{align*}

For the adjoint linearized problem \eqref{2.10} we obtain that
\begin{align*}
\|q_{u}\|_{L^2(0,T;V)}\leq C\|y_{u}-x^{d}\|_{L^2(0,T;V)}.
\end{align*}

Then, we conclude that the second derivative of $J^{T}$ can be estimated as
\begin{align*}
\D J^{T}(u)''v^2\geq \iint_{Q_{T}}|y_{v}|^2dxdt+(1-C\|y_{u}-x^{d}\|_{L^2(0,T;V)})\iint_{Q_{T}}v^2 dxdt,
\end{align*}
which gives the assertion.
\end{proof}

\smallskip\

\subsection{Stationary optimal control problem for Navier-Stokes equations}

As for the nonstationary Navier-Stokes equations, our optimal control problem is to find $\overline{u}$, $\overline{y}$ being the solution of \eqref{2.11} associated to $\overline{u}$, minimizing the functional
\begin{align}\label{2.13}
\D J(u)=\frac{1}{2}\|y-x^{d}\|_{L^2(\Om)}^2 +\frac{\alpha}{2}\|u\|_{L^2(\Om)}^2,
\end{align}
where $x^{d}\in(L^2(\Om))^2$ is a target and $\alpha>0$ is a constant.

We are going to show that the optimal control problem \eqref{2.13} has a solution.

\begin{thm}\label{teo6}
There exists at least an element $\overline{u}\in L^2(\Om)$, and $\overline{y}\in H^2(\Om)\cap V$ solution of \eqref{2.11} associated to $\overline{u}$, such that the functional $J(u)$ attains its minimum at $\overline{u}$.
\end{thm}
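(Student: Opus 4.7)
The plan is the direct method of the calculus of variations, parallel to Theorem~\ref{teo2} but in the stationary setting. Since $J\geq 0$, the infimum $m:=\inf J(u)$ is finite (for instance $u=0$ yields $y=0$ and $J(0)=\tfrac12\|x^{d}\|_{L^2(\Om)}^2<\infty$). Let $(u_{n})$ be a minimizing sequence, where each $u_{n}$ is assumed to satisfy the smallness hypothesis of Theorem~\ref{teo5} so that an associated solution $y_{n}\in H^2(\Om)\cap V$ of \eqref{2.11} exists and is unique. From $\tfrac{\alpha}{2}\|u_{n}\|_{L^2(\Om)}^{2}\leq J(u_{n})$ I deduce that $(u_{n})$ is bounded in $(L^2(\Om))^2$, and from \eqref{2.12} together with standard elliptic regularity for the stationary Navier-Stokes equation, $(y_{n})$ is bounded in $H^2(\Om)\cap V$.

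Extracting a subsequence (still denoted $(u_{n},y_{n})$), I may assume $u_{n}\rightharpoonup\bar{u}$ weakly in $(L^2(\Om))^2$ and $y_{n}\rightharpoonup\bar{y}$ weakly in $H^2(\Om)\cap V$. The $V'$-ball of radius $C(\Om)\mu^{2}$ is weakly closed, so $\bar{u}$ still satisfies the smallness hypothesis. By Rellich-Kondrachov, the embedding $V\hookrightarrow H$ is compact, hence $y_{n}\to\bar{y}$ strongly in $(L^2(\Om))^2$. The main obstacle is passing to the limit in the nonlinear term of the variational formulation of \eqref{2.11}; here I would write
\begin{align*}
b(y_{n},y_{n},v)-b(\bar{y},\bar{y},v)=b(y_{n}-\bar{y},y_{n},v)+b(\bar{y},y_{n}-\bar{y},v),
\end{align*}
and apply estimate \eqref{2.1} of Lemma~\ref{lema3.1}(4) together with the strong $L^2$ convergence and the uniform $V$ bound on both sequences, so that each term vanishes in the limit. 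The linear terms (diffusion and pressure) pass to the limit by weak convergence in $V$, and Theorem~\ref{teo5} then guarantees that $\bar{y}\in H^2(\Om)\cap V$ is the unique solution associated to $\bar{u}$.

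Finally, $J$ is a sum of squared norms on $(L^2(\Om))^2\times(L^2(\Om))^2$, hence convex and continuous and therefore weakly lower semicontinuous. Combined with the strong convergence $y_{n}\to\bar{y}$ in $(L^2(\Om))^2$ (which makes the tracking term converge) and the weak convergence $u_{n}\rightharpoonup\bar{u}$ (which yields $\|\bar{u}\|_{L^2(\Om)}\leq\liminf\|u_{n}\|_{L^2(\Om)}$), this gives
\begin{align*}
J(\bar{u})\leq\liminf_{n\to\infty}J(u_{n})=m,
\end{align*}
so $\bar{u}$ is an optimal control with associated optimal state $\bar{y}$. The whole argument is essentially the stationary counterpart of the one already carried out in the evolutionary case, the only delicate point being the treatment of the convection term, which in two dimensions is rendered routine by the compactness of $V\hookrightarrow H$ and the trilinear estimate.
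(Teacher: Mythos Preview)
Your argument is correct and follows essentially the same direct-method scheme as the paper: boundedness of the minimizing sequence from the cost, weak compactness, passage to the limit in the trilinear term via a compact embedding, and weak lower semicontinuity of $J$. Two minor remarks: the paper uses the compact embedding $H^2(\Om)\cap V\hookrightarrow V$ (giving strong convergence in $V$) rather than your $V\hookrightarrow H$, and the paper does not impose the smallness hypothesis of Theorem~\ref{teo5} on the minimizing sequence---existence of a solution $y_{n}$ to \eqref{2.11} holds for any $u_{n}\in(L^{2}(\Om))^{2}$, so that assumption is unnecessary (and would otherwise have to be justified for an arbitrary minimizing sequence).
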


\begin{proof}
The functional $J$ is bounded below by zero. Then we can take a minimizing sequence $(y_{n},u_{n})$. Is easy to see that $\frac{\alpha}{2}\|u_{n}\|^2\leq J(u_{n})<\infty$, which implies that the sequence $(u_{n})$ is uniformly bounded in $L^2(\Om)$. 

From the regularity of the Navier-Stokes problem we obtain that the sequence $(y_{n})$ is uniformly bounded in $H^2(\Om)\cap V$, and then implies that we can extract a weakly  convergent subsequence, denoted in the same way $(y_{n},u_{n})$, such that 
\begin{align*}
y_{n}\rightharpoonup y^{*} \text{ in }H^2(\Om)\cap V ,\quad u_{n}\rightharpoonup u^{*} \text{ in }L^2(\Om).
\end{align*}

Now, we need to ensure that $(y^{*},u^{*})$ is a solution of the Navier-Stokes problem. For this steep we use the trilinear continuous form $b$. Thanks to the compact embedding $H^2(\Om)\cap V \hookrightarrow V$ and the continuity of $b$, we obtain that $b(y_{n},y_{n},v)\to b(y^{*},y^{*},v)$, as $n\to\infty$. Then, we have that $(y^{*},u^{*})$ satisfies the Navier-Stokes problem. 

Therefore, as $J$ is weakly lower semicontinuous, the result is proved.

\end{proof}

\subsubsection{First-Order necessary optimality conditions}

The following result of J. De los Reyes \cite{de2004primal}, shows the first-order optimality conditions in the case of the stationary Navier-Stokes equations. This theorem is more general, since De los Reyes consider the constrained optimal control problem. He proved the result based on a result of Lagrange multipliers.

\begin{thm}[see \cite{de2004primal}]\label{teo7}
Let $(\overline{u},\overline{y})$ be an optimal solution for \eqref{2.13}, such that $\mu>\mathcal{M}(\overline{y})$, where $\mathcal{M}(y)=\D\sup_{v\in V}\frac{|b(v,v,y)|}{\|v\|_{V}^2}$. Then there exists $q\in V$ such that satisfies the following optimality system in variational sense
\begin{align}\label{2.14}
\left\{\begin{array}{rllll}
-\mu \Delta \overline{y}+(\overline{y}\cdot\nabla)\overline{y}+\nabla \overline{p}&=&-q& ,& \textrm{in }\Om,\\
\Div \ \overline{y}&=&0&,&\text{in } \Om,\\
\overline{y}&=&0&,&\text{on }\pa\Om,\\
\D -\mu \Delta q-(\overline{y}\cdot\nabla)q+(\nabla\overline{y})^{T}q+\nabla\pi&=&\overline{y}-x^{d}&,&\text{in }\Om\\
\Div \ q&=&0&,&\text{in } \Om,\\
q&=&0&,&\text{on }\pa\Om.
\end{array}
\right.
\end{align}

Moreover, $(q,\pi)\in (H^2(\Om))^2\times H^1(\Om)$ and satisfies the estimate
\begin{align}\label{2.15}
\|q\|_{V}\leq \frac{c}{\mu-\mathcal{M}(\overline{y})}\|\overline{y}-x^{d}\|_{L^2(\Om)}.
\end{align}
\end{thm}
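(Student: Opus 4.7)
The plan is to derive the optimality system \eqref{2.14} as the standard Lagrange multiplier rule for a $C^1$ constrained minimization and then read off \eqref{2.15} from a single energy estimate on the adjoint equation. I would first encode the constraint via the control-to-state map $G:u\mapsto y_u$. Under the running hypothesis $\mu>\mathcal{M}(\overline{y})$ the implicit function theorem, applied to the stationary Navier--Stokes operator $F(y,u)=\mu Ay+B(y)-u$ at the point $(\overline{y},\overline{u})$, shows that $G$ is $C^1$ in a neighborhood of $\overline{u}$, with derivative $w=G'(\overline{u})h\in V$ given by the linearized Oseen problem
\begin{equation*}
-\mu\Delta w+(\overline{y}\cdot\nabla)w+(w\cdot\nabla)\overline{y}+\nabla \pi=h,\quad \Div\,w=0,\quad w|_{\pa\Om}=0.
\end{equation*}
Denote the resulting operator by $\mathcal{L}:V\to V'$.

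The key step is to show that $\mathcal{L}$ is a topological isomorphism; this is exactly where the hypothesis is used. Testing $\mathcal{L}w$ against $w$, Lemma \ref{lema3.1}(2) kills $b(\overline{y},w,w)$, while Lemma \ref{lema3.1}(1) gives $b(w,\overline{y},w)=-b(w,w,\overline{y})$, so the definition of $\mathcal{M}(\overline{y})$ yields
\begin{equation*}
\langle \mathcal{L}w,w\rangle \;\ge\; \bigl(\mu-\mathcal{M}(\overline{y})\bigr)\|w\|_V^2,
\end{equation*}
and Lax--Milgram supplies a unique solution in $V$ for every datum in $V'$. With surjectivity of $F_y(\overline{y},\overline{u})$ at hand, the Lagrange multiplier rule produces $q\in V$ satisfying $\mathcal{L}^{*}q=\overline{y}-x^d$ and the gradient equation $\alpha\overline{u}+q=0$, which rearranges to the form written in \eqref{2.14} (absorbing $\alpha$ into the multiplier). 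To identify $\mathcal{L}^{*}$, I would transpose the trilinear form via Lemma \ref{lema3.1}(1), (3); this produces the adjoint equation $-\mu\Delta q-(\overline{y}\cdot\nabla)q+(\nabla\overline{y})^{T}q+\nabla\pi=\overline{y}-x^d$, matching \eqref{2.14}. Since $\overline{y}-x^d\in L^2(\Om)$, standard elliptic regularity for the stationary Oseen system upgrades $q$ to $(H^2(\Om))^2\cap V$ and yields $\pi\in H^1(\Om)$.

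For the estimate \eqref{2.15}, I would test the adjoint equation with $q$. The term $-b(\overline{y},q,q)$ vanishes by Lemma \ref{lema3.1}(2), while $((\nabla\overline{y})^{T}q,q)=b(q,\overline{y},q)=-b(q,q,\overline{y})$ is bounded by $\mathcal{M}(\overline{y})\|q\|_V^2$ by the definition of $\mathcal{M}$, so
\begin{equation*}
\bigl(\mu-\mathcal{M}(\overline{y})\bigr)\|q\|_V^2 \;\le\; (\overline{y}-x^d,q) \;\le\; c\,\|\overline{y}-x^d\|_{L^2(\Om)}\|q\|_V,
\end{equation*}
and dividing by $\|q\|_V$ gives \eqref{2.15}. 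The main obstacle is the isomorphism step: without $\mu>\mathcal{M}(\overline{y})$ the linearization may be singular at $\overline{y}$, the constraint qualification for the Lagrange multiplier theorem fails, and one has to retreat to the framework of nonsingular solutions mentioned in the introduction. Once coercivity of $\mathcal{L}$ is available, everything else reduces to standard duality and an energy estimate.
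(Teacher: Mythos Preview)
Your argument is correct and matches what the paper does: the paper does not give its own proof but cites De~los~Reyes \cite{de2004primal}, remarking only that the condition $\mu>\mathcal{M}(\overline{y})$ guarantees the regular point (constraint qualification) condition of Zowe--Kurcyusz, which is precisely the surjectivity of $\mathcal{L}$ you establish via coercivity before invoking the Lagrange multiplier rule. Your derivation of \eqref{2.15} by testing the adjoint equation against $q$ is the standard energy estimate and is exactly the origin of the constant $(\mu-\mathcal{M}(\overline{y}))^{-1}$ in the statement.
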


\begin{rem}
The assumption $\mu>\mathcal{M}(\overline{y})$ is a sufficient requirement for the satisfaction of the regular point condition, see \cite{zowe1979regularity}. 
\end{rem}

\subsubsection{Second order conditions}

The next result is relevant for our purposes. In the next section we use this result to prove the turnpike property for a particular system, the Oseen equation.

\begin{thm}\label{teo8}
Assume that $\|y-x^{d}\|_{V}$ is sufficiently small and $\mu>\mathcal{M}(\overline{y})$. Then, the Hessian $J(u)''$ is positive definite. 
\end{thm}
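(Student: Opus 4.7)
My plan is to mirror the strategy used in the proof of Theorem \ref{teo4} for the evolutionary problem, adapting every estimate to the stationary setting and exploiting the hypothesis $\mu > \mathcal{M}(\bar y)$ to gain coercivity for the linearised operator around $\bar y$. First, writing $y_v = y'(u)v$ for the directional derivative of the control-to-state map in direction $v$, I would differentiate the state equation $\mu A y + B(y) = u$ twice to obtain that $y_v$ solves $\mu A y_v + B'(\bar y) y_v = v$ and that $z := y''(u)(v,v)$ solves $\mu A z + B'(\bar y) z = -2(y_v\cdot\nabla) y_v$, using $B''(y)(y_v,y_v) = 2(y_v\cdot\nabla) y_v$ since $B$ is quadratic. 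A direct computation then gives
\begin{align*}
J''(u)v^2 = \|y_v\|_{L^2(\Om)}^2 + \alpha\|v\|_{L^2(\Om)}^2 + (\bar y - x^d, z)_{L^2(\Om)}.
\end{align*}
Using the adjoint equation \eqref{2.14} satisfied by $q$, i.e.\ $\mu A^* q + B'(\bar y)^* q = \bar y - x^d$, the last term can be rewritten as $-2\, b(y_v, y_v, q)$, giving the same structural formula as in Theorem \ref{teo4}.

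Second, I would estimate the nonlinear correction $b(y_v, y_v, q)$. Inequality \eqref{2.1} of Lemma \ref{lema3.1} together with Poincar\'e gives $|b(y_v,y_v,q)| \leq C\|y_v\|_{V}^2 \|q\|_V$. To control $\|y_v\|_V$ in terms of $\|v\|_{L^2(\Om)}$, I would test the linearised equation with $y_v$: using properties (1)–(2) of Lemma \ref{lema3.1} one has $\langle B'(\bar y) y_v, y_v\rangle = -b(y_v, y_v, \bar y)$, hence the absolute value of this term is at most $\mathcal{M}(\bar y)\|y_v\|_V^2$, and therefore
\begin{align*}
(\mu - \mathcal{M}(\bar y))\,\|y_v\|_V^2 \leq C\|v\|_{L^2(\Om)}\,\|y_v\|_V,
\end{align*}
yielding $\|y_v\|_V \leq C(\mu - \mathcal{M}(\bar y))^{-1}\|v\|_{L^2(\Om)}$. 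This is exactly the place where the assumption $\mu > \mathcal{M}(\bar y)$ is used, and I expect it to be the main technical obstacle since one must verify that the cancellation from property (2) really produces the coercive constant $\mu - \mathcal{M}(\bar y)$ rather than a weaker factor.

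Finally, invoking the a priori estimate \eqref{2.15} from Theorem \ref{teo7} gives $\|q\|_V \leq c(\mu - \mathcal{M}(\bar y))^{-1}\|\bar y - x^d\|_{L^2(\Om)}$. Combining all three bounds one obtains a constant $C' = C'(\mu - \mathcal{M}(\bar y))$ with
\begin{align*}
|b(y_v, y_v, q)| \leq C'\,\|\bar y - x^d\|_{L^2(\Om)}\,\|v\|_{L^2(\Om)}^2,
\end{align*}
and therefore
\begin{align*}
J''(u)v^2 \geq \bigl(\alpha - 2C'\,\|\bar y - x^d\|_{L^2(\Om)}\bigr)\|v\|_{L^2(\Om)}^2.
\end{align*}
Since the hypothesis $\|\bar y - x^d\|_V$ small implies $\|\bar y - x^d\|_{L^2(\Om)}$ small via the continuous embedding $V\hookrightarrow L^2(\Om)$, choosing the tracking term sufficiently small makes the coefficient strictly positive, which proves positive definiteness of the Hessian. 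The argument is essentially the stationary analogue of Theorem \ref{teo4}, the only real work being the sensitivity bound for $y_v$ that consumes the condition $\mu > \mathcal{M}(\bar y)$.
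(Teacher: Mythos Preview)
Your proposal is correct and follows essentially the same route as the paper's own proof: both cite the second-derivative formula $J''(u)v^2 = \|y_v\|_{L^2}^2 + \alpha\|v\|_{L^2}^2 - 2b(y_v,y_v,q)$, bound the trilinear term via $\|y_v\|_V \lesssim \|v\|$ (this is where $\mu>\mathcal{M}(\bar y)$ enters) and the adjoint estimate \eqref{2.15} for $\|q\|_V$, and conclude positivity from smallness of the tracking term. Your write-up is in fact more explicit than the paper's, which simply invokes ``reasoning as in Theorem~\ref{teo4}'' at the estimation step.
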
                                                                                       

\begin{proof}
By Theorem $3.3$ in \cite{casas2007error}, we have that the second G\^ateaux derivative of $J$ is given by
\begin{align}\label{2.16}
J''(u)v^2=\D\int_{\Om}|y_{v}|^2dx+\int_{\Om}|v|^2dx-2\int_{\Om}(y_{v}\cdot\nabla)y_{v}\cdot q_{v}dx,
\end{align}
where $y_{v}$ is the solution of the linearized problem
\begin{align}\label{2.17}
\left\{\begin{array}{rllll}
-\mu\Delta y_{v}+(y\cdot\nabla)y_{v}+(y_{v}\cdot\nabla)y+\nabla p_{v}&=&v& ,& \text{in }\Om,\\
\Div \ y_{v}&=&0&,&\text{in } \Om,\\
y_{v}&=&0&,&\text{on }\pa\Om,\\
\end{array}
\right.
\end{align}
in the direction $v$, and $q_{u}$ the solution of the adjoint linearized problem
\begin{align}\label{2.18}
\left\{\begin{array}{rllll}
-\mu\Delta q_{u}+(\nabla y)^{T}q_{u}-(y\cdot\nabla)q_{u}+\nabla \tilde{p}&=&y-x^{d}& ,& \text{in }\Om,\\
\Div \ q_{u}&=&0&,&\text{in }\Om,\\
q_{u}&=&0&,&\text{on }\pa\Om,\\
\end{array}
\right.
\end{align}
Reasoning as in Theorem \ref{teo4}, using the Theorem \ref{teo8}, we deduce that
\begin{align*}
J''(u)v^2\geq\D\int_{\Om}|y_{v}|^2dx+(1-C\|y-x^{d}\|_{V})\int_{\Om}|v|^2dx,
\end{align*}
which implies the claim.
\end{proof}

\setcounter{equation}{0}
\section{Turnpike property for the two-dimensional Navier-Stokes problem with time-dependent control}

In this section we prove a turnpike result for the optimality system of Navier-Stokes problem, under the condition that the initial and final states are close enough to the stationary primal and dual state, respectively. Also, we need some assumption of smallness for the solution of the stationary adjoint equation.

As in the paper of Porretta and Zuazua \cite{porretta2014remarks}, the smallness condition is to ensure the exponential turnpike property of the linearized optimality system. In \cite{porretta2014remarks}, the authors prove under the smallness of the target and the initial condition that the linearized optimality system satisfies the turnpike property. However, by the quadratic nature of the nonlinear term $B$,  in this paper we only assume the smallness of the tracking term.

From the results of Section \ref{section3}, we have the following optimality system for the nonstationary Navier-Stokes equations (see Theorem \ref{teo3})
\begin{align}\label{3.1}
\left\{\begin{array}{rllll}
y_{t}^{T}-\mu\Delta y^{T}+(y^{T}\cdot\nabla)y^{T}+\nabla p^{T}&=&-q^{T}& ,& \text{in }Q_{T},\\
\Div \ y^{T}&=&0&,&\text{in } Q_{T},\\
y^{T}&=&0&,&\text{on }\Gamma_{T},\\
y^{T}(x,0)&=&y_0(x)&,& x\in\Om,\\
-q_{t}^{T}-\mu\Delta q^{T}-(y^{T}\cdot\nabla)q^{T}+(\nabla y^{T})^{T}q^{T}+\nabla \pi^{T}&=&y^{T}-x^{d}& ,& \text{in }Q_{T},\\
\Div \ q^{T}&=&0&,&\text{in } Q_{T},\\
q^{T}&=&0&,&\text{on }\Gamma_{T},\\
q^{T}(x,T)&=&q_0&,& x\in\Om.
\end{array}
\right.
\end{align}

And, for the stationary Navier-Stokes problem, see Theorem \ref{teo7}, we obtain 
\begin{align}\label{3.2}
\left\{\begin{array}{rllll}
-\mu \Delta \overline{y}+(\overline{y}\cdot\nabla)\overline{y}+\nabla \overline{p}&=&-\overline{q}& ,& \text{in }\Om,\\
\Div \ \overline{y}&=&0&,&\text{in } \Om,\\
\overline{y}&=&0&,&\text{on }\pa\Om,\\
\D -\mu \Delta \overline{q}-(\overline{y}\cdot\nabla)\overline{q}+(\nabla\overline{y})^{T}\overline{q}+\nabla\overline{\pi}&=&\overline{y}-x^{d}&,&\text{in }\Om\\
\Div \ \overline{q}&=&0&,&\text{in } \Om,\\
\overline{q}&=&0&,&\text{on }\pa\Om.
\end{array}
\right.
\end{align}

Now, we develop a local analysis around a given steady state optimal control $(\overline{y},\overline{u})$.

We consider $y=\overline{y}+z$, $p=\overline{p}+\eta$, $q=\overline{q}+\varphi$, and $\pi=\overline{\pi}+\nu$. Then, the optimality system linearized around the stationary solutions takes the form
\begin{align}\label{3.3}
\left\{\begin{aligned}
&z_{t}-\mu\Delta z+(\overline{y}\cdot\nabla)z+(z\cdot\nabla)\overline{y}+\nabla\eta=-\varphi,&&\text{in }Q_{T},\\
&\Div \ z=0,&&\text{in }Q_{T},\\
&z=0,&&\text{on }\Gamma_{T},\\
&z(x,0)=z_0,&&\text{in }\Om,\\
&-\varphi_{t}-\mu\Delta \varphi-(\overline{y}\cdot\nabla)\varphi+(\nabla\overline{y})^{T}\varphi+\nabla\nu=z-(\nabla z)^{T}\overline{q}\\
&\qquad\qquad\qquad\qquad\qquad\qquad\qquad\qquad\qquad\quad+(z\cdot\nabla)\overline{q},&&\text{in }Q_{T},\\
&\Div \ \varphi=0,&&\text{in }Q_{T},\\
&\varphi=0,&&\text{on }\Gamma_{T},\\
&\varphi(x,T)=\varphi_0,&&\text{in }\Om,
\end{aligned}
\right.
\end{align}
where $z_0=y_0-\overline{y}$ and $\varphi_0=q_0-\overline{q}$.

We observe that the right hand side of the equation satisfied by $\varphi$ in \eqref{3.3}, can be written using the definition of $B$ as
\begin{align*}
(\nabla z)^{T}\overline{q}-(z\cdot\nabla)\overline{q}=B'(z)^{*}\overline{q}.
\end{align*}

Since the nonlinear function $B$ is of quadratic nature, we deduce that the derivative of $B'(z)^{*}\overline{q}$ with respect to $z$ is the same function $B'(z)^{*}\overline{q}$. Then, the optimality system \eqref{3.3}, in the references case when $\varphi_0=0$, can be expressed as a linear quadratic optimal control problem, minimizing the functional
\begin{align}\label{oe}
\D L(u)=\frac{1}{2}\int_{Q_{T}}|z|^2 dxdt-\int_{Q_{T}}[(\nabla z)^{T}\overline{q}-(z\cdot\nabla)\overline{q}]dxdt +\frac{1}{2}\int_{0}^{T}\|v(t)\|_{L^2(\Om)}^2dt,
\end{align}
such that $(z,\varphi)$ is the unique solution of
\begin{align*}
\left\{\begin{array}{rllll}
z_{t}-\mu\Delta z+(\overline{y}\cdot\nabla)z+(z\cdot\nabla)\overline{y}+\nabla\eta&=&v&,&\text{in }Q_{T},\\
\Div \ z&=&0&,&\text{in }Q_{T},\\
z&=&0&,&\text{on }\Gamma_{T},\\
z(x,0)&=&z_0&,&\text{in }\Om.
\end{array}
\right.
\end{align*}

For our purposes, we need to give the basic hypothesis such that the optimal control problem for Oseen equation \eqref{oe} satisfies the turnpike property. To ensure this, we will use the result of Porretta and Zuazua \cite{porretta2013long}. In this paper the authors prove the turnpike property for linear problems.

Consider the control problem for Oseen equation
\begin{align}\label{4.6}
\left\{\begin{array}{rllll}
z_{t}+\mathcal{A}z&=&v& ,& \text{in }(0,T),\\
z(0)&=&z_0&,&
\end{array}
\right.
\end{align}
where $\mathcal{A}$ is the Oseen operator defined by \eqref{4.2} and the control $v$ is in $L^2(0,T; H)$. 

It is easy to prove that the Oseen operator satisfies
\begin{align}\label{4.7}
\exists \gamma,\xi>0\; :\quad \langle \mathcal{A}z,z\rangle_{V',V}+\gamma \|z\|_{H}^2\geq\xi \|z\|_{V}^{2}\;,\forall x\in V.
\end{align}

Also, if we assume that the initial data $z_0$ is in $X_{\sigma}$ and $\sigma>0$ satisfies \eqref{4.4}, we obtain by Theorem \ref{teo9} that the semigroup associated to the Oseen equation decays exponentially. 

Then, there exists $C>0$ such that for every solution $z$ of \eqref{4.6} and $z_0\in X_{\sigma}$, we have
\begin{align}\label{4.8}
\|z(T)\|_{H}\leq C\left(\|z_0\|_{H}+\int_{0}^{T}\|v(s)\|_{V'}ds\right).
\end{align}

Besides, from the paper of Fursikov \cite{fursikov2012feedback} we know that there exists a linear bounded operator $L:V\to V$ such that the control $v(t,\cdot)$ can be expressed by
\begin{align*}
v(t,\cdot)=Lz(t,\cdot)
\end{align*}
with the solution of \eqref{4.6} satisfying 
\begin{align}\label{4.9}
\D\|z(t,\cdot)\|_{V}\leq c\|z_0\|_{V}\; e^{-\sigma t},\quad \text{for }t\geq 0.
\end{align}

Assuming that the tracking term $\|\overline{y}-x^{d}\|_{V}$ is sufficiently small, the viscosity function satisfies $\mu>\mathcal{M}(\overline{y})$, and $z_0\in X_{\sigma}$, reasoning as in the proof of Theorem \ref{teo8}, we deduce that the functional $L$ is coercive. This implies, by Theorem $3.10$ in \cite{porretta2013long}, that the optimality system \eqref{3.3} satisfies the turnpike property. Namely, 
\begin{align*}
\|z^{T}(t)\|_{L^2(\Om)}+\|\varphi^{T}(t)\|_{L^2(\Om)}\leq C(e^{-\gamma t}+e^{-\gamma(T-t)})\;,\quad \forall t\in(0,T).
\end{align*}

Then, as in \cite{porretta2013long}, we can define a linear bounded operator in $(L^2(\Om))^2$ as
\begin{align*}
P(T)z_0=\varphi(0)
\end{align*}
such that
\begin{align}\label{3.8}
\|P(t)-\hat{P}\|_{\mathcal{L}((L^2(\Om))^2,(L^2(\Om))^2)} \leq C e^{-2\gamma t},
\end{align}
for some constant $C>0$ and $\gamma>0$. $\hat{P}$ being the corresponding operator for the infinite horizon control problem.

Using the previous turnpike property for Oseen equation, we can state and prove the main theorem of this paper. 

\begin{thm}\label{teo10}
We assume that the tracking term $\|\overline{y}-x^{d}\|_{V}$ is sufficiently small, $\mu>\mathcal{M}(\overline{y})$, and $z_0=y_0-\overline{y}\in X_{\sigma}$. Then, there exists some $\epsilon>0$ such that for every $y_0,q_0$ with
\begin{align*}
\|y_0-\overline{y}\|_{L^2(\Om)}+\|q_0-\overline{q}\|_{L^2(\Om)}\leq \epsilon,
\end{align*}
there exists a solution of the optimality system \eqref{3.1} such that
\begin{align}\label{3.4}
\|y^{T}(t)-\overline{y}\|_{L^2(\Om)}+\|q^{T}(t)-\overline{q}\|_{L^2(\Om}\leq C(e^{-\gamma t}+e^{-\gamma(T-t)}),\quad \forall t<T,
\end{align}
where $\gamma>0$ is the stabilizing rate of the linearized optimality system \eqref{3.3}.
\end{thm}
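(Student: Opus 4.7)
The plan is to realize the full nonlinear optimality system \eqref{3.1} as a small perturbation of the linearised system \eqref{3.3} (whose turnpike bound was recalled just above the theorem) and to close by Banach fixed point in a weighted-in-time norm that encodes \eqref{3.4}. Writing $y^{T}=\overline{y}+z$, $q^{T}=\overline{q}+\varphi$ in \eqref{3.1} and subtracting the stationary system \eqref{3.2}, one obtains exactly \eqref{3.3} plus two genuinely quadratic perturbations: $N_{1}(z):=-(z\cdot\nabla)z$ on the right-hand side of the primal equation and $N_{2}(z,\varphi):=(z\cdot\nabla)\varphi-(\nabla z)^{T}\varphi$ on the right-hand side of the adjoint. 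The boundary data become $z_{0}=y_{0}-\overline{y}$, $\varphi_{0}=q_{0}-\overline{q}$, both of norm at most $\epsilon$.

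Introduce the Banach space $X$ of pairs $(\hat z,\hat\varphi)\in (C([0,T];H)\cap L^{2}(0,T;V))^{2}$ equipped with the turnpike-weighted norm
\begin{equation*}
\|(\hat z,\hat\varphi)\|_{X}:=\sup_{t\in[0,T]}\frac{\|\hat z(t)\|_{L^{2}}+\|\hat\varphi(t)\|_{L^{2}}}{e^{-\gamma t}+e^{-\gamma(T-t)}},
\end{equation*}
and define $\mathcal F:B_{X}(0,R)\to X$ by letting $(z,\varphi)=\mathcal F(\hat z,\hat\varphi)$ solve \eqref{3.3} with extra forcings $N_{1}(\hat z)$ and $N_{2}(\hat z,\hat\varphi)$ on the respective right-hand sides and with the prescribed data $z_{0},\varphi_{0}$. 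Combining the linear turnpike theorem of Porretta and Zuazua, the bound \eqref{3.8}, and the decay \eqref{4.8}--\eqref{4.9} furnished by Theorem \ref{teo9}, yields a source-sensitive linear estimate of the form
\begin{equation*}
\|\mathcal F(\hat z,\hat\varphi)\|_{X}\le C\bigl(\|z_{0}\|_{L^{2}}+\|\varphi_{0}\|_{L^{2}}+\|N_{1}(\hat z)\|_{\mathcal Y}+\|N_{2}(\hat z,\hat\varphi)\|_{\mathcal Y}\bigr)
\end{equation*}
for an appropriate weighted source space $\mathcal Y$, while the two-dimensional Ladyzhenskaya estimate \eqref{2.1} delivers the quadratic bounds $\|N_{1}(\hat z)\|_{\mathcal Y}\le C\|\hat z\|_{X}^{2}$ and $\|N_{2}(\hat z,\hat\varphi)\|_{\mathcal Y}\le C\|\hat z\|_{X}\|\hat\varphi\|_{X}$, the squared time-weight $(e^{-\gamma t}+e^{-\gamma(T-t)})^{2}$ being absorbed up to a harmless constant.

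Setting $R=2C\epsilon$ and choosing $\epsilon$ so small that $CR\le 1/2$ makes $\mathcal F$ a self-map of $B_{X}(0,R)$ and a contraction (an analogous bilinear Lipschitz bound gives $\|\mathcal F(u_{1})-\mathcal F(u_{2})\|_{X}\le CR\,\|u_{1}-u_{2}\|_{X}$), so Banach's theorem produces a unique fixed point $(z^{T},\varphi^{T})\in B_{X}(0,R)$. Then $y^{T}=\overline{y}+z^{T}$ and $q^{T}=\overline{q}+\varphi^{T}$ solve \eqref{3.1}, and the very membership $\|(z^{T},\varphi^{T})\|_{X}\le R$ is the desired estimate \eqref{3.4}. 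The hard part is calibrating $\mathcal Y$: it must be strong enough for the trilinear brackets $b(\hat z,\hat z,\cdot)$ and $b(\hat z,\cdot,\hat\varphi)+b(\cdot,\hat z,\hat\varphi)$ to land in the admissible right-hand-side class of the linearised turnpike estimate, yet weak enough that \eqref{2.1} still produces an $R^{2}$-type bound \emph{without} degrading the rate $\gamma$. The two-dimensionality of $\Omega$ is what makes this balance possible; in three dimensions one would be forced into a strong-solution framework.
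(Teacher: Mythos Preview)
Your architecture is the paper's: write $y^{T}=\overline{y}+z$, $q^{T}=\overline{q}+\varphi$, treat the nonlinear system as the linearised optimality system \eqref{3.3} perturbed by the quadratic remainders, and close a fixed point in a turnpike-weighted space. Two differences are worth recording. First, the paper uses Schauder (invariance of a convex set plus continuity/compactness of the solution map) rather than a Banach contraction; your contraction route is cleaner and even gives local uniqueness, but it does not change the analysis in substance. Second, and more importantly, the paper does not merely cite a ``source-sensitive linear estimate'': it manufactures one by the Riccati decoupling $h:=\varphi-P(T-t)z$, writes separate Duhamel formulas for $h$ and then $z$ against the exponentially stable generators $\mathcal{M}=\mathcal{A}^{*}+\hat P$ and $\mathcal{N}=\mathcal{A}+\hat P$, uses \eqref{3.8} to absorb $P(T-t)-\hat P$ as a Gronwall term, and reads off the weighted bound. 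That concrete step is exactly the content you pack into the words ``yields a source-sensitive linear estimate''; in a full write-up you would need to supply it.

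One point to fix: your weighted norm controls only $\|\hat z(t)\|_{L^{2}}$ and $\|\hat\varphi(t)\|_{L^{2}}$ pointwise in $t$, but the Ladyzhenskaya inequality \eqref{2.1} you invoke for $N_{1},N_{2}$ needs an $H^{1}$ factor on each argument, so a pure $L^{2}$ weight cannot produce $\|N_{1}(\hat z)\|_{\mathcal Y}\le C\|\hat z\|_{X}^{2}$ pointwise. The paper avoids this by taking the invariant set in the $V$-norm, $\|z(t)\|_{V}+\|\varphi(t)\|_{V}\le M(e^{-\gamma t}+e^{-\gamma(T-t)})$, first getting the $L^{2}$ estimates from the Duhamel/Gronwall argument and then bootstrapping to $V$ via the regularity of the linearised (Oseen) problem. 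If you replace your $L^{2}$-based weight by a $V$-based one (or equivalently bootstrap at the end as the paper does), your scheme goes through.
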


\begin{proof}
The proof follows the arguments of \cite{porretta2013long, porretta2014remarks}.

The main idea of the proof is to consider a perturbed problem of \eqref{3.3} and then to implement a fixed point argument, which gives the solutions of the optimality system \eqref{3.1}.

Let $X$ be the set
\begin{align*}
X=\{(z,\varphi)\; :\; \|z\|_{V}+\|\varphi\|_{V}\leq M(e^{-\gamma t}+e^{-\gamma(T-t)}), \; \forall t\in[0,T]\},
\end{align*}
for some $M\leq 1$. For $(\hz,\hp)\in X$, we consider
\begin{align*}
R_1(\hz)=-(\hz\cdot\nabla)\hz
\end{align*}
and
\begin{align*}
R_2(\hz,\hp)=(\hz\cdot\nabla)\hp-(\hp\cdot\nabla)\hz.
\end{align*}

Note that the terms $R_1$ and $R_2$ can be expressed in an abstract way, namely
\begin{align*}
R_1(\hz)=-B(\hz)\quad,\quad R_2(\hz,\hp)=-B'(\hz)^{*}\hp.
\end{align*}

Then, using the properties for the nonlinear form $B$, we obtain that
\begin{align}\label{3.5}
\begin{array}{c}
\|R_1(\hz)(t)\|_{L^2(\Om)}\leq\|R_1(\hz)(t)\|_{V}\leq c_0M^2(e^{-2\gamma t}+e^{-2\gamma(T-t)}),\\[0.2em]
\|R_2(\hz,\hp)(t)\|_{L^2(\Om)}\leq\|R_2(\hz,\hp)(t)\|_{V}\leq c_1M^2(e^{-2\gamma t}+e^{-2\gamma(T-t)}),
\end{array}
\end{align}
where $c_1$ depend on $\|\overline{y}\|_{V}$.

Besides, we define the operator
\begin{align}\label{3.6}
R(\hz,\hp)=(z,\varphi),
\end{align}
where $(z,\varphi)$ solve the problem
\begin{align}\label{3.7}
\left\{\begin{aligned}
&z_{t}-\mu\Delta z+(\overline{y}\cdot\nabla)z+(z\cdot\nabla)\overline{y}+\nabla\eta=-\varphi+R_1(\hz),&&\text{in }Q_{T},\\
&\Div \ z=0,&&\text{in }Q_{T},\\
&z=0,&&\text{on }\Gamma_{T},\\
&z(x,0)=z_0,&&\text{in }\Om,\\
&-\varphi_{t}-\mu\Delta \varphi-(\overline{y}\cdot\nabla)\varphi+(\nabla\overline{y})^{T}\varphi+\nabla\nu=z-(\nabla z)^{T}\overline{q}\\
&\qquad\qquad\qquad\qquad\qquad\qquad\qquad\qquad\qquad\quad+(z\cdot\nabla)\overline{q}+R_2(\hz,\hp),&&\text{in }Q_{T},\\
&\Div \ \varphi=0,&&\text{in }Q_{T},\\
&\varphi=0,&&\text{on }\Gamma_{T},\\
&\varphi(x,T)=\varphi_0,&&\text{in }\Om,
\end{aligned}
\right.
\end{align}

Then, we need to prove that the operator $R$ has a fixed point which is a solution of \eqref{3.1} and satisfies the estimate \eqref{3.4}.

Define $h$ as a solution of the equation
\begin{align}\label{3.9}
\left\{\begin{aligned}
&-h_{t}-\mu\Delta h-(\overline{y}\cdot\nabla)h+(\nabla\overline{y})^{T}h\\
&\qquad+\nabla\nu+P(T-t)h=P(T-t)R_1(\hz)+R_2(\hz,\hp),&&\text{in }Q_{T},\\
&\Div \ h=0,&&\text{in }Q_{T},\\
&h=0,&&\text{on }\Gamma_{T},\\
&h(x,T)=\varphi_0,&&\text{in }\Om.
\end{aligned}
\right.
\end{align}

Then, it is easy to prove that $h$ satisfies 
\begin{align}\label{3.10}
h=\varphi-P(T-t)z
\end{align}
in a weak sense, namely for all test function $\phi$
\begin{align*}
\D\int_{\Om}h(t)\phi=\int_{\Om}\varphi(t)\phi dx-\int_{\Om}z(t)[P(T-t)\phi]dx.
\end{align*}

We observe that $h$ can be estimated as
\begin{align*}
\D h(t)=&e^{-\mathcal{M}(T-t)}\varphi_0+\int_{t}^{T}e^{\mathcal{M}(t-s)})[P(T-s)R_1(\hz)(s)+R_2(\hz,\hp)(s)]ds\\
&-\int_{t}^{T}e^{\mathcal{M}(t-s)}[\hat{P}-P(T-s)]h(s)ds,
\end{align*}
where $\mathcal{M}v=-\mu\Delta v-(\overline{y}\cdot\nabla)v+(\nabla\overline{y})^{T}v+\hat{P}$. We observe that $\mathcal{M}$ is exponentially stable with rate $\gamma$. Using the estimates \eqref{3.8} and \eqref{3.5}, we obtain  
\begin{align*}
\D \|h(t)\|_{L^2(\Om)}\leq &\;e^{-\gamma(T-t)}\|\varphi_0\|_{L^2(\Om)}+cM^2\int_{t}^{T}e^{\gamma(t-s)}(e^{-2\gamma s}+e^{-2\gamma(T-s)})ds\\
&+\int_{t}^{T}e^{\gamma(t-s)}e^{-2\gamma(T-s)}\|h(s)\|_{L^2(\Om)}ds\\
\leq &\; e^{-\gamma(T-t)}\|\varphi_0\|_{L^2(\Om)}+cM^2[e^{-2\gamma t}+e^{-\gamma(T-t)}]\\
&+\int_{t}^{T}e^{-2\gamma T+\gamma t+\gamma s}\|h(s)\|_{L^2(\Om)}ds.
\end{align*}

By the Gronwall inequality
\begin{align*}
\D \|h(t)\|_{L^2(\Om)}\leq &\;e^{-\gamma(T-t)}\|\varphi_0\|_{L^2(\Om)}&\\
& +cM^2[e^{-2\gamma t}+e^{-\gamma(T-t)}]\D \text{exp}\left(\D\int_{t}^{T}e^{-2\gamma T+\gamma t+\gamma s}ds\right).
\end{align*}

The last integral can be estimated easily by $\frac{1}{\gamma}$. Therefore
\begin{align}\label{3.11}
\|h(t)\|_{L^2(\Om)}\leq &\;e^{-\gamma(T-t)}[\|\varphi_0\|_{L^2(\Om)}+cM^2] +cM^2 e^{-2\gamma t}.
\end{align}

From the estimate for $h$ we can find a similar estimate for $z$ and $\varphi$. Indeed, observe that $z$ satisfies the following equation
\begin{align*}
z_t-\mu\Delta z+(\overline{y}\cdot\nabla)z+(z\cdot\nabla)\overline{y}+\hat{P}+\nabla\eta=(\hat{P}-P(T-t))z-h+R_1(\hz).
\end{align*}

Therefore, we obtain that
\begin{align*}
\D z(t)= e^{-\mathcal{N}t}z_0+\int_{0}^{t}e^{-\mathcal{N}(t-s)}&[\hat{P}-P(T-s)]z(s)ds&\\
&+\int_{0}^{t}e^{-\mathcal{N}(t-s)}(R_1(\hz)(s)-h(s))ds,
\end{align*}
where $\mathcal{N}v=-\mu\Delta v+(\overline{y}\cdot\nabla)v+(v\cdot\nabla)\overline{y}+\hat{P}$. We note that $\mathcal{N}$ satisfies the exponentially decay with rate $\gamma$. Again, using the estimate \eqref{3.5}, \eqref{3.8}, and \eqref{3.11} we get
\begin{align*}
\D\|z(t)\|_{L^2(\Om)}\leq&\; e^{-\gamma t}\|z_0\|_{L^2(\Om)}+\int_{0}^{t}e^{-\gamma(t-s)}e^{-2\gamma(T-s)}\|z(s)\|_{L^2(\Om)}ds\\
&+cM^2\int_{0}^{t}e^{-\gamma(t-s)}(e^{-2\gamma(T-s)}+e^{-2\gamma s})ds\\
&+\int_{0}^{t}e^{-\gamma(t-s)}[e^{-\gamma(T-s)}(\|\varphi_0\|_{L^2(\Om)}+cM^2)+cM^2e^{-2\gamma s}]ds\\
\leq&\;  e^{-\gamma t}\|z_0\|_{L^2(\Om)}+cM^2[e^{-2\gamma(T-t)}+e^{-\gamma t}]\\
&+[\|\varphi_0\|_{L^2(\Om)}+cM^2]e^{-\gamma(T-t)}\\
&+\int_{0}^{t}e^{-2\gamma T-\gamma t+3\gamma s}\|z(s)\|_{L^2(\Om)}ds.
\end{align*}

Applying again the Gronwall inequality, we obtain 
\begin{align}\label{3.12}
\|z(t)\|_{L^2(\Om)}\leq [\|z_0\|_{L^2(\Om)}+\|\varphi_0\|_{L^2(\Om)}+cM^2](e^{-\gamma t}+e^{-\gamma(T-t)}).
\end{align}

Using now that $\varphi=h+P(T-t)z$, we get an estimate for $\varphi$
\begin{align}\label{3.13}
\|\varphi(t)\|_{L^2(\Om)}\leq [\|z_0\|_{L^2(\Om)}+\|\varphi_0\|_{L^2(\Om)}+cM^2](e^{-\gamma t}+e^{-\gamma(T-t)}).
\end{align}

Now, we go back on the first equation of \eqref{3.7}. Observe that
\begin{align}\label{3.14}
\|-\varphi(t)+R_1(\hz)(t)\|_{L^2}\leq  [\|z_0\|_{L^2}+\|\varphi_0\|_{L^2}+cM^2](e^{-\gamma t}+e^{-\gamma(T-t)}).
\end{align}

Then, by the regularity of the solution of the linearized problem, see Lemma \ref{lem1}, we have that
\begin{align}
\|z(t)\|_{H^2(\Om)}\leq [\|z_0\|_{L^2(\Om)}+\|\varphi_0\|_{L^2(\Om)}+cM^2](e^{-\gamma t}+e^{-\gamma(T-t)}).
\end{align}

And, we can conclude that
\begin{align*}
\|z(t)\|_{V}\leq [\|z_0\|_{L^2(\Om)}+\|\varphi_0\|_{L^2(\Om)}+cM^2](e^{-\gamma t}+e^{-\gamma(T-t)}).
\end{align*}

Analogously, we obtain the same estimate for $\varphi$, namely
\begin{align*}
\|\varphi(t)\|_{V}\leq [\|z_0\|_{L^2(\Om)}+\|\varphi_0\|_{L^2(\Om)}+cM^2](e^{-\gamma t}+e^{-\gamma(T-t)}).
\end{align*}

Finally, we choose $M\leq 1$ such that $cM^2\leq \frac{M}{2}$. Then, if we assume that the initial and final state are close enough to the stationary primal and dual state, respectively, we obtain that 
\begin{align*}
c[\|z_0\|_{L^2(\Om)}+\|\varphi_0\|_{L^2(\Om)}+M^2]\leq M.
\end{align*}

So, we deduce that the space $X$ becomes an invariant convex subset of $L^2(0,T;(L^2(\Om))^2)$. Besides, we observe that operator $R$ is continuous and compact, then we conclude the existence of a fixed point $(z,\varphi)$ of $R$. It is easy to see that $(z,\varphi)$ is a solution of the optimality system \eqref{3.1}. Then the proof is complete.
\end{proof}

\begin{rem}
Since we develop a local analysis around the optimal solution for the stationary problem, the turnpike property for Oseen equation is fundamental in our work. In this point is fundamental the smallness assumption on the tracking term. If we remove the last condition, we need to suppose that the optimality system \eqref{3.3} satisfy the turnpike property to ensure our result.

An interesting problem is to prove the necessary and sufficient conditions to obtain the turnpike property for the linearized optimality systems.
\end{rem}

\setcounter{equation}{0}
\section{Turnpike property for the two-dimensional Navier-Stokes problem with time-independent control}

In this section we prove a turnpike property for the two-dimensional Navier-Stokes problem in the particular case when the controls are independent on time. 

The proof is different from that given in the previous section when the control function depends on time. In this case, we obtain the result using the classical $\Gamma$-convergence, and a standard stability property of the Navier-Stokes equation, see Theorem \ref{teo3.4}, under suitable conditions of smallness of the data.

That technique is a general principle proved by Porretta and Zuazua \cite{porretta2014remarks} for the semilinear heat equation. Of course, can also be employed for a larger class of semilinear problems enjoying standard exponentially stability. 

Let $\Om\subset \R^2$ be a bounded and simply connected domain, with boundary $\pa\Om$ of class $C^2$. We consider the Navier-Stokes control problem
\begin{align}\label{72}
\left\{\begin{array}{rllll}
y_{t}-\mu\Delta y+(y\cdot\nabla)y+\nabla p&=&u(x)& ,& \text{in }Q_{T},\\
\Div \ y&=&0&,&\text{in }Q_{T},\\
y&=&0&,&\text{on }\Gamma_{T},\\
y(x,0)&=&y_0(x)&,& x\in\Om,
\end{array}
\right.
\end{align}
with controls $u=u(x)$ independent of time. 

We consider the optimal control problem
\begin{align}\label{73}\left\{\begin{array}{c}
\D\min J^{T}(u)=\frac{1}{2}\int_{0}^{T}\|y(t)-z\|_{L^2(\Om)}^2 dt+\frac{T}{2}\|u\|_{L^2(\Om)}^2,\\[1em]
\text{s. a. } \qquad y  \text{ solution of \eqref{72} and }u\in C,
\end{array}
\right.
\end{align}
where $C$ is a closed convex subset of $(L^2(\Om))^2$ and $z\in (L^2(\Om))^2$ denotes the desired state.

In addition, we consider the analogous stationary optimal control problem
\begin{align}\label{74}
\left\{\begin{array}{rllll}
-\mu\Delta y+(y\cdot\nabla)y+\nabla p&=&u(x)& ,& \text{in }\Om,\\
\Div \ y&=&0&,&\text{in }\Om,\\
y&=&0&,&\text{on }\pa\Om,\\
\end{array}
\right.
\end{align}
together with the corresponding functional 
\begin{align}\label{75}\left\{\begin{array}{c}
\D\min J(u)=\frac{1}{2}\left(\|y-z\|_{L^2(\Om)}^2+\|u\|_{L^2(\Om)}^2\right),\\[1em]
\text{s. a. } \qquad y  \text{ solution of \eqref{74} and }u\in C.
\end{array}
\right.
\end{align}

In both cases, we consider that $C$ has the following form
\begin{align*}
C\equiv U_{ad}:=\{u\in (L^2(\Om))^2\;:\; \|u\|\leq c(\Om)\mu^2,\;\forall x\in\Om\}.
\end{align*}

\begin{rem}
In view of the Theorems $\ref{teo2}$ and $\ref{teo6}$, we note that in both cases the optima are achieved.  The only difference is that in this case we consider constrains on the controls. However, since $U_{ad}$ is a convex closed subset of $(L^2(\Om))^2$, we assert the result using the classical results of convex analysis.
\end{rem}

For a given source term $u$ which does not depend on time, we consider a steady solution $(y_{\infty},p_{\infty})\in ((H^2(\Om))^2\cap V)\times (H^1(\Om)\cap L_0^2(\Om))$ to the stationary Navier-Stokes problem. Then, the solution $(y,p)$ to \eqref{72} converge to $(y_{\infty},p_{\infty})$ as $t\to\infty$, under suitable assumptions.

\begin{thm}\label{teo3.4}
There exists $C>0$ and $\alpha>0$ depending only on $\Om$ such that, under the condition 
\begin{align*}
\D\|\nabla y_{\infty}\|_{L^2(\Om)}\leq C\mu,
\end{align*}
there exists a unique weak solution $(y,p)$ of \eqref{72} which satisfies
\begin{align}\label{78.1}
\|y(t)-y_{\infty}\|_{L^2(\Om)}\leq \|y_0-y_{\infty}\|_{L^2(\Om)}e^{-\alpha t},\quad \forall t\geq 0.
\end{align}

\end{thm}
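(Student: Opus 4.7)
My plan is to study the difference $w := y - y_\infty$ and derive an energy inequality with a strictly negative dissipation term, provided the smallness condition holds. Subtracting the stationary equation satisfied by $y_\infty$ from the evolution equation \eqref{72} and using the identity
\begin{align*}
(y\cdot\nabla)y-(y_\infty\cdot\nabla)y_\infty=(w\cdot\nabla)w+(w\cdot\nabla)y_\infty+(y_\infty\cdot\nabla)w,
\end{align*}
I find that $w$ solves
\begin{align*}
w_t-\mu\Delta w+(w\cdot\nabla)w+(w\cdot\nabla)y_\infty+(y_\infty\cdot\nabla)w+\nabla(p-p_\infty)=0,
\end{align*}
with $\Div\,w=0$, $w|_{\partial\Om}=0$, and $w(0)=y_0-y_\infty$. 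Uniqueness of $(y,p)$ is inherited from Theorem \ref{teo1}, so the only real task is to prove \eqref{78.1}.

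Next I would perform the standard energy estimate: take the $L^2$ inner product of the $w$-equation with $w$ and integrate over $\Om$. Thanks to $\Div\,w=0$ and $w|_{\partial\Om}=0$, the pressure term drops, and by Lemma \ref{lema3.1}(2) both $b(w,w,w)$ and $b(y_\infty,w,w)$ vanish. This yields
\begin{align*}
\frac{1}{2}\frac{d}{dt}\|w\|_{L^2(\Om)}^2+\mu\|\nabla w\|_{L^2(\Om)}^2+b(w,y_\infty,w)=0.
\end{align*}

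The key step is to absorb the remaining trilinear term using the smallness hypothesis. Writing $b(w,y_\infty,w)=-b(w,w,y_\infty)$ and combining the 2D Ladyzhenskaya inequality $\|w\|_{L^4}^2\leq C\|w\|_{L^2}\|\nabla w\|_{L^2}$ with Poincaré, or alternatively invoking the definition of the constant $C(\Om)$ in the remark after Theorem \ref{teo5}, I obtain
\begin{align*}
|b(w,y_\infty,w)|\leq C(\Om)\|\nabla y_\infty\|_{L^2(\Om)}\|\nabla w\|_{L^2(\Om)}^2.
\end{align*}
Choosing the smallness constant so that $C(\Om)\|\nabla y_\infty\|_{L^2(\Om)}\leq \mu/2$ leaves a strictly positive coefficient in front of $\|\nabla w\|_{L^2}^2$:
\begin{align*}
\frac{d}{dt}\|w\|_{L^2(\Om)}^2+\mu\|\nabla w\|_{L^2(\Om)}^2\leq 0.
\end{align*}

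To conclude, I apply Poincaré's inequality $\|\nabla w\|_{L^2}^2\geq \lambda_1\|w\|_{L^2}^2$ (with $\lambda_1$ the first eigenvalue of $-\Delta$ on $V$), obtaining
\begin{align*}
\frac{d}{dt}\|w\|_{L^2(\Om)}^2+\mu\lambda_1\|w\|_{L^2(\Om)}^2\leq 0,
\end{align*}
and Gronwall gives \eqref{78.1} with $\alpha=\mu\lambda_1/2$. The main obstacle is calibrating the constants: pinning down the precise relationship between $C(\Om)$, the Ladyzhenskaya constant and $\lambda_1$, and making sure the smallness threshold $\|\nabla y_\infty\|_{L^2}\leq C\mu$ is compatible with the bound $\|y_\infty\|_V\leq\mu^{-1}\|u\|_{V'}$ from Theorem \ref{teo5} so that the problem is not vacuous; the nonlinear estimate itself is a direct consequence of the 2D product laws and Lemma \ref{lema3.1}.
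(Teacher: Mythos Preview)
Your proof is correct and follows essentially the same approach as the paper: derive the equation for $w=y-y_\infty$, test with $w$, kill $b(w,w,w)$ and $b(y_\infty,w,w)$ via Lemma~\ref{lema3.1}(2), absorb the remaining term $b(w,y_\infty,w)$ by the smallness assumption, and close with Poincar\'e and Gronwall. The only technical difference is that the paper bounds $|b(w,y_\infty,w)|\le C\|\nabla y_\infty\|_{L^2}\|w\|_{L^2}\|\nabla w\|_{L^2}$ and then applies Young's inequality, whereas you bound it directly by $C(\Om)\|\nabla y_\infty\|_{L^2}\|\nabla w\|_{L^2}^2$; your route is a hair shorter but otherwise equivalent.
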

\begin{proof}
Let $(y,p)$ and $(y_{\infty},p_{\infty})$ be the solution of the evolutionary and stationary Navier-Stokes problem, respectively. Let $y=y_{\infty}+w$ and $p=p_{\infty}+q$, where $(w,q)$ solves the problem
\begin{align}\label{78}
\left\{\begin{aligned}
&w_{t}-\mu\Delta w+(w\cdot\nabla)w+(y_{\infty}\cdot\nabla)w+(w\cdot\nabla)y_{\infty}+\nabla q=0 ,&& \text{in }Q_{T},\\
&\Div \ w=0,&&\text{in }Q_{T},\\
&w=0,&&\text{on }\Gamma_{T},\\
&w(x,0)=y_0(x)-y_{\infty},&& x\in\Om.
\end{aligned}
\right.
\end{align}

Multiplying the equation \eqref{78} by $w$ and using the definition of $b$, we obtain
\begin{align*} 
\D\int_{\Om}w_{t}\;w dx-\mu\int_{\Om}\nabla w \;w+b(w,w,w)+b(y_{\infty},w,w)+b(w,y_{\infty},w)=0
\end{align*}
and by Lemma \ref{lema3.1}, we deduce that
\begin{align*}
\D\frac{1}{2}\frac{d}{dt}\|w(t)\|_{L^2(\Om)}^2+\mu\|\nabla w(t)\|_{L^2(\Om)}^2\leq C \|\nabla y_{\infty}\|_{L^2(\Om)}\|w(t)\|_{L^2(\Om)}\|\nabla w(t)\|_{L^2(\Om)}.
\end{align*}

We remind the following Young inequality
\begin{align*}
x_1\cdot\ldots\cdot x_{n}\leq e_1 x_1^{p_1}+\ldots+e_{n-1}x_{n-1}^{p_{n-1}}+C(e_1,\ldots,e_{n-1})x_{n}^{p_{n}},
\end{align*}
where $p_1^{-1}+\ldots+p_{n}^{-1}=1$ and $e_1,\ldots,e_{n-1}, x_1,\ldots,x_{n}$. are positive real numbers.

Then, using the Young inequality for $x_1=\|\nabla y_{\infty}\|_{L^2(\Om)}\|w(t)\|_{L^2(\Om)}$,  \\$x_2=\|\nabla w(t)\|_{L^2(\Om)}$, $e_1=\frac{1}{2\mu}$, $e_2=\frac{\mu}{2}$ and $p_1=p_2=2$, we obtain
\begin{align*}
\D\frac{d}{dt}\|w(t)\|_{L^2(\Om)}^2+\mu\|\nabla w(t)\|_{L^2(\Om)}^2\leq C\frac{1}{\mu} \|\nabla y_{\infty}\|_{L^2(\Om)}^2\|w(t)\|_{L^2(\Om)}^2.
\end{align*}

From the Poincar\'e inequality for the Stokes operator, we have that
\begin{align*}
\D\frac{d}{dt}\|w(t)\|_{L^2(\Om)}^2+\left(C_1\mu-\frac{C}{\mu}\|\nabla y_{\infty}\|_{L^2(\Om)}^2\right)\|w(t)\|_{L^2(\Om)}^2\leq 0.
\end{align*}

Provided that $\D\frac{C}{\mu^2}\|\nabla y_{\infty}\|_{L^2(\Om)}^2\leq C_1 $, we have
\begin{align*}
\D\frac{d}{dt}\|w(t)\|_{L^2(\Om)}^2+2\alpha\|w(t)\|_{L^2(\Om)}^2\leq0,
\end{align*}
which finally gives 
\begin{align*}
\|y(t)-y_{\infty}\|_{L^2(\Om)}^2\leq \|y_0-y_{\infty}\|_{L^2(\Om)}^2e^{-2\alpha t},\quad \forall t\geq 0.
\end{align*}
\end{proof}

Now, we can prove the following turnpike result for controls independent of time.
\begin{thm}\label{teo15}
Assume that the hypotheses of the Theorems $\ref{teo1}$ and $\ref{teo5}$ holds. Let $(y^{T_{n}},u^{T_{n}})$ be an optimal solution of \eqref{73} for $T=T_{n}$. Then any accumulation point $(y_{\infty},u_{\infty})$, as $n\to\infty$, is an optimal solution of \eqref{75}.
\end{thm}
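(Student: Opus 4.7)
The plan is to view the result through a $\Gamma$-convergence lens applied to the rescaled functionals $\frac{1}{T_n} J^{T_n}$ on the admissible set $U_{ad}\subset (L^2(\Om))^2$. Since $U_{ad}$ is weakly closed and bounded, any sequence of optimal controls $(u^{T_n})$ admits a subsequence (still denoted $u^{T_n}$) with $u^{T_n}\rightharpoonup u_\infty$ weakly in $(L^2(\Om))^2$, which by assumption is the accumulation point. Denote by $y^{T_n}$ the associated evolutionary states solving \eqref{72}, and by $y_\infty^{T_n}$ the stationary states solving \eqref{74} with the same source $u^{T_n}$. Theorem \ref{teo5} applies uniformly in $n$, so the $y_\infty^{T_n}$ are uniformly bounded in $(H^2(\Om))^2\cap V$; arguing as in the proof of Theorem \ref{teo6} (weak convergence in $H^2\cap V$, strong convergence in $V$ via compact embedding, passage to the limit in the trilinear form $b$), I extract a further subsequence with $y_\infty^{T_n}\to y_\infty$ strongly in $V$, where $y_\infty$ solves the stationary Navier--Stokes problem with source $u_\infty$.

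The next step is to exploit exponential stabilization. Because every $u\in U_{ad}$ satisfies $\|u\|_{V'}\leq C\|u\|_{L^2(\Om)}\leq C\mu^2$, the estimate \eqref{2.12} yields $\|\nabla y_\infty^{T_n}\|_{L^2(\Om)}\leq C\mu$ uniformly in $n$, so the smallness hypothesis of Theorem \ref{teo3.4} holds with constants independent of $n$. Consequently
\begin{equation*}
\|y^{T_n}(t)-y_\infty^{T_n}\|_{L^2(\Om)}\leq \|y_0-y_\infty^{T_n}\|_{L^2(\Om)}\, e^{-\alpha t}, \quad t\geq 0,
\end{equation*}
with a uniform $\alpha>0$. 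Splitting $\|y^{T_n}(t)-z\|^2 = \|y^{T_n}(t)-y_\infty^{T_n}\|^2 + 2\langle y^{T_n}(t)-y_\infty^{T_n},\, y_\infty^{T_n}-z\rangle + \|y_\infty^{T_n}-z\|^2$, the first two pieces have time integrals that are $O(1)$ in $T_n$; dividing by $T_n$ and using the strong $L^2$-convergence $y_\infty^{T_n}\to y_\infty$, I conclude that
\begin{equation*}
\frac{1}{T_n}\int_0^{T_n}\|y^{T_n}(t)-z\|_{L^2(\Om)}^2\,dt\;\longrightarrow\;\|y_\infty-z\|_{L^2(\Om)}^2.
\end{equation*}

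Combining this with weak lower semicontinuity of $u\mapsto\|u\|_{L^2(\Om)}^2$ delivers the liminf inequality $\liminf_n \frac{1}{T_n}J^{T_n}(u^{T_n})\geq J(u_\infty)$. For the matching upper bound, let $\overline{u}$ be any minimizer of $J$ on $U_{ad}$ (existing by Theorem \ref{teo6}) with associated stationary state $\overline{y}$; the same stabilization and splitting argument applied to the constant-in-time admissible control $\overline{u}$ yields $\frac{1}{T_n}J^{T_n}(\overline{u})\to J(\overline{u})$. Optimality of $u^{T_n}$ gives $\frac{1}{T_n}J^{T_n}(u^{T_n})\leq \frac{1}{T_n}J^{T_n}(\overline{u})$, so passing to the limit produces $J(u_\infty)\leq J(\overline{u})=\min_{U_{ad}}J$, i.e.\ $(y_\infty,u_\infty)$ is optimal for \eqref{75}. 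I expect the main technical difficulty to be the nonlinear passage to the limit in the trilinear form $b(y_\infty^{T_n},y_\infty^{T_n},\cdot)$ under only weak $L^2$-convergence of $u^{T_n}$; this is precisely handled by the uniform $H^2\cap V$-bound from Theorem \ref{teo5} together with the compact embedding into $V$, which upgrades the convergence of $y_\infty^{T_n}$ to strong in $V$ and thereby accommodates the quadratic nonlinearity.
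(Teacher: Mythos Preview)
Your proposal is correct and follows essentially the same $\Gamma$-convergence strategy as the paper: weak compactness of $U_{ad}$, the exponential stabilization of Theorem~\ref{teo3.4} to reduce the time-averaged tracking term to its stationary counterpart, and passage to the limit in the stationary equation via the uniform $H^2\cap V$-bound and compact embedding into $V$. The only notable refinement is that you make the $\limsup$ (recovery) direction explicit by testing against a fixed stationary minimizer $\overline{u}$ as a constant-in-time competitor, which is cleaner than the paper's somewhat implicit treatment of that step.
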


\begin{proof}
The proof is based on arguments similar to those used in \cite{porretta2014remarks}. The main idea of the proof is to use the $\Gamma$-convergence, since we consider the control function independent of time. 

Then, let $(T_{n})_{n\in\N}$ be an increasing sequence of times converging to infinity. For each $n\in\N$, by Theorem \ref{teo2} the optimal control problem \eqref{73} has at least a minimizer $(y^{T_{n}},u^{T_{n}})\in U_{ad}$. In particular, $(u^{T_{n}})$ is uniformly bounded in $(L^2(\Om))^2$, so we can extract a subsequence, still labeled by n, such that 
\begin{align*}
u^{T_{n}}\rightharpoonup u_{\infty}, \text{ weakly in }L^2(\Om), \text{ as }n\to\infty.
\end{align*}

We claim that 
\begin{multline}\label{84}
\D\lim_{n\to\infty}\frac{1}{T_{n}}\left(\frac{1}{2}\int_{0}^{T_{n}}\|y^{T_{n}}(t)-z\|_{L^2(\Om)}^2 dt+\frac{T_{n}}{2}\|u^{T_{n}}\|_{L^2(\Om)}^2\right)=\\
\frac{1}{2}\left(\|y_{\infty}-z\|_{L^2(\Om)}^2+\|u_{\infty}\|_{L^2(\Om)}^2\right),
\end{multline}
where $y_{\infty}$ solves
\begin{align}\label{85}
\left\{\begin{array}{rllll}
-\mu\Delta y_{\infty}+(y_{\infty}\cdot\nabla)y_{\infty}+\nabla p_{\infty}&=&u_{\infty}(x)& ,& \text{in }\Om,\\
\Div \ y_{\infty}&=&0&,&\text{in }\Om,\\
y_{\infty}&=&0&,&\text{on }\pa\Om.\\
\end{array}
\right.
\end{align}

Observe that the previous limit is equivalent to saying that if we consider $I^{T_{n}}$ and $I$ the values of the minimizers for the time dependent problem in $[0,T_{n}]$ and the steady state, respectively, then 
\begin{align*}
\D\lim_{n\to\infty}\frac{I^{T_{n}}}{T_{n}}=I.
\end{align*}

Indeed, if $\|\nabla \overline{y}^{T_{n}}\|_{L^2(\Om)}\leq C\mu$ by Theorem \ref{teo3.4} we have that
\begin{align}\label{86}
\|y^{T_{n}}-\overline{y}^{T_{n}}\|_{L^2(\Om)}\leq \|y_{\infty}-\overline{y}^{T_{n}}\|_{L^2(\Om)}e^{-\alpha t},\;\forall t>0,
\end{align}
where $\nabla \overline{y}^{T_{n}}$ satisfies
\begin{align}\label{87}
\left\{\begin{array}{rllll}
-\mu\Delta \overline{y}^{T_{n}}+(\overline{y}^{T_{n}}\cdot\nabla)\overline{y}^{T_{n}}+\nabla \overline{p}^{T_{n}}&=&u^{T_{n}}(x)& ,& \text{in }\Om,\\
\Div \ \overline{y}^{T_{n}}&=&0&,&\text{in }\Om,\\
\overline{y}^{T_{n}}&=&0&,&\text{on }\pa\Om.\\
\end{array}
\right.
\end{align}

We observe that from the regularity of the stationary Navier-Stokes problem, we have that
$$
\|\overline{y}^{T_{n}}\|_{H^2(\Om)}+\|\overline{p}^{T_{n}}\|_{H^1(\Om)}\leq C(1+\|u^{T_{n}}\|_{L^2(\Om)}^3),
$$
and in particular
$$
\|\nabla \overline{y}^{T_{n}}\|_{L^2(\Om)}\leq C\|u^{T_{n}}\|_{L^2(\Om)}.
$$

Then, $\|\nabla \overline{y}^{T_{n}}\|_{L^2(\Om)}\leq C\mu$. By Theorem \ref{teo3.4}, we obtain the estimate \eqref{86}.

Now, we decompose
$$
\D\frac{J^{T_{n}}(u^{T_{n}})}{T_{n}}-J(u_{\infty})=J_1^{n}+J_2^{n},
$$
where
\begin{align}\label{88}
J_1^{n}=\D\frac{J^{T_{n}}(u^{T_{n}})}{T_{n}}-J(u^{T_{n}}),
\end{align}
and
\begin{align}\label{89}
J_2^{n}=J(u^{T_{n}})-J(u_{\infty}).
\end{align}

We study the convergence of $J_{1}^{n}$ and $J_{2}^{n}$ as $n\to\infty$. First, we analyze $J_{1}^{n}$:
\begin{align*}
J_{1}^{n}=&\frac{1}{T_{n}}\left(\frac{1}{2}\int_{0}^{T_{n}}\|y^{T_{n}}(t)-z\|_{L^2(\Om)}^2 dt+\frac{T_{n}}{2}\|u^{T_{n}}\|_{L^2(\Om)}^2\right)\\
&-\frac{1}{2}\|\overline{y}^{T_{n}}-z\|_{L^2(\Om)}^2-\frac{1}{2}\|u^{T_{n}}\|_{L^2(\Om)}^2\\
=&\frac{1}{2T_{n}}\int_{0}^{T_{n}}\|y^{T_{n}}(t)-z\|_{L^2(\Om)}^2 dt-\frac{1}{2}\|\overline{y}^{T_{n}}-z\|_{L^2(\Om)}^2.
\end{align*}

Since $u^{T_{n}}$ is uniformly bounded in $(L^2(\Om))^2$, from the regularity of the Navier-Stokes problem, we obtain that $\overline{y}^{T_{n}}$ is uniformly bounded in $(H^2(\Om))^2\cap V$, in particular, in $(L^2(\Om))^2$. Then, using again the exponential stability property \eqref{86}, we deduce that
$$
I_{1}^{n}\to0\quad \text{as }n\to\infty.
$$

For $I_2^{n}$ we have
\begin{align*}
I_2^{n}&=J(u^{T_{n}})-J(u_{\infty})\\
&=\frac{1}{2}\|\overline{y}^{T_{n}}-z\|_{L^2(\Om)}^2+\frac{1}{2}\|u^{T_{n}}\|_{L^2(\Om)}^2-\frac{1}{2}\|y_{\infty}-z\|_{L^2(\Om)}^2-\frac{1}{2}\|u_{\infty}\|_{L^2(\Om)}^2.
\end{align*}

Since $\overline{y}^{T_{n}}$ is bounded in $(H^2(\Om))^2\cap V$, there exists some $y^{*}\in (H^2(\Om))^2\cap V$ and a subsequence of $\overline{y}^{T_{n}}$ such that
$$
\overline{y}^{T_{n}}\rightharpoonup y^{*}\text{ weakly in }H^2(\Om)\cap V.
$$

We know that the injection of $V$ into $(L^2(\Om))^2$ is compact, so we have also
$$
\overline{y}^{T_{n}}\to y^{*}\text{ in the norm of }L^2(\Om). 
$$

Besides, the trilinear function $b$ is continuous and by the Lemma $1.5$, chapter II in \cite{temam2001navier}, we obtain that $b(\overline{y}^{T_{n}},\overline{y}^{T_{n}},v)\to b(y^{*},y^{*},v)$, for all $v\in V$. Finally, since $u^{T_{n}}$ converge to $u_{\infty}\in U_{ad}$, by the uniqueness of the Navier-Stokes problem, we obtain that $y^{*}=y_{\infty}$. Therefore, we conclude that 
$$
I_{2}^{n}\to 0,\quad \text{as }n\to\infty.
$$

This completes the proof of the claim.

Now, we need to prove that $u_{\infty}$ is an optimal solution of \eqref{75}. Indeed, by the weak convergence of $u^{T_{n}}$ we obtain that
$$
\D\|u_{\infty}\|_{L^2(\Om)}\leq\liminf_{n\to\infty}\|u^{T_{n}}\|_{L^2(\Om)}.
$$

Also, we have that $\overline{y}^{T_{n}}$ weakly converges to $y_{\infty}$, as $n\to\infty$. Then,
\begin{align*}
\D\left\|\frac{\int_{0}^{T_{n}}  y^{T_{n}}(t)dt}{T_{n}}-y_{\infty}\right\|_{L^2(\Om)}\leq \left\|\frac{\int_{0}^{T_{n}}y^{T_{n}}(t)dt}{T_{n}}-\overline{y}^{T_{n}}\right\|_{L^2(\Om)}+\|\overline{y}^{T_{n}}-y_{\infty}\|_{L^2(\Om)},
\end{align*}
and we obtain that
$$
\frac{\int_{0}^{T_{n}}y^{T_{n}}(t)dt}{T_{n}}\to y_{\infty} \text{ in }L^2(\Om),\text{ as }n\to\infty.
$$

Then, necessarily we have
$$
J(u_{\infty})\leq \liminf_{n\to\infty}\frac{J^{T_{n}}(u^{T_{n}})}{T_{n}}.
$$

Therefore, using the claim \eqref{84}, we obtain
$$
J(u_{\infty})\leq I
$$
and finally, $u_{\infty}$ is a minimizer for the steady state problem, with $y_{\infty}$ the associated state.
\end{proof}

\begin{rem}
We observe that the proof of the Theorem $\ref{teo15}$ uses the exponential stabilization result (Theorem $\ref{teo3.4}$) in many ocasions. We know that in the three-dimensional case this property is also true for strong solutions, but under more smallness condition of the stationary solutions. This implies that the three-dimensional case is more complex that the two-dimensional problem. 
\end{rem}

\begin{rem}
Note that we considered the $L^2$-norm in the tracking term on the functional to minimize. However, in the three-dimensional case, this choice is not correct because there is no way to assure the optimal state to be a strong solution of the evolutionary Navier-Stokes problem. The good choice would be, for instance \cite{casas1998optimal},
\begin{align*}
J(u)=\D\frac{1}{8}\int_{0}^{T}\left(\int_{\Om}|y-x^{d}|^{4}dx\right)^2 dt +\frac{T}{2}\|u\|_{L^2(\Om)},
\end{align*}
with $x^{d}\in L^8([0,T]; (L^4(\Om))^3)$. 
\end{rem}

\section*{Acknowledgment}

I wish to express my gratitude to Prof. Enrique Zuazua, who proposed me this problem and for several interesting suggestions and comments.


\begin{thebibliography}{99}

\bibitem{abergel1993some}{\sc F. Abegel and E. Casas}. {\it Some optimal control problems of multistate equations appearing in fluid mechanics}. RAIRO-Mod{\'e}lisation math{\'e}matique et analyse num{\'e}rique, 27(2):223--246, 1993.


\bibitem{abergel1990some}{\sc F. Abergel and R. Temam}. {\it On some control problems in fluid mechanics}. Theoretical and Computational Fluid Dynamics, 1(6):303--325, 1990. 

\bibitem{allaire2010long}{\sc G. Allaire, A. M{\"u}nch, and F. Periago}. {\it Long time behavior of a two-phase optimal design for the heat equation}. SIAM Journal on Control and Optimization, 48(8):5333--5356, 2010.


\bibitem{barbu2003feedback}{\sc V. Barbu}. {\it Feedback stabilization of Navier--Stokes equations}. ESAIM: Control, Optimisation and Calculus of Variations, 9:197--205, 2003.

\bibitem{barbu2011stabilization}{\sc V. Barbu}. {\it Stabilization of Navier--Stokes Flows}. Springer, 2011.

\bibitem{boyer2012mathematical}{\sc F. Boyer and P. Fabrie}. {\it Mathematical tools for the study of the incompressible Navier--Stokes equations and related models}, volume 183. Springer Science \& Business Media, 2012.

\bibitem{casas1998optimal}{\sc E. Casas}. {\it An optimal control problem governed by the evolution Navier--Stokes equations}. Optimal control of viscous flow, 59:79--95, 1998.

\bibitem{casas2007error}{\sc E. Casas, M. Mateos, and J.-P. Raymond}. {\it Error estimates for the numerical approximation of a distributed control problem for the steady-state Navier--Stokes equations}. SIAM Journal on Control and Optimization, 46(3):952--982, 2007. 

\bibitem{de2004primal}{\sc J. De los Reyes}. {\it A Primal-Dual Active Set Method for Bilaterally Control Constrained Optimal Control of the Navier--Stokes Equations}. Numerical functional analysis and optimization, 25(7):657--684, 2004.

\bibitem{fursikov2001stabilizability}{\sc A. Fursikov}. {\it Stabilizability of Two-Dimensional Navier--Stokes Equations with Help of a Boundary Feedback Control}. Journal of Mathematical Fluid Mechanics, 3(3):259--301, 2011.

\bibitem{fursikov2012feedback}{\sc A. Fursikov}. {\it Feedback stabilization for Navier--Stokes equations: theory and calculations}. Mathematical Aspects of Fluid Mechanics, 402:130--172, 2012.

\bibitem{galdi2013introduction}{\sc G. P. Galdi}. {\it An Introduction to the Mathematical Theory of the Navier--Stokes Equations: Volume I: Linearized Steady Problems}, volume 38. Springer Science \& Business Media, 2013.

\bibitem{hinze2000optimal}{\sc M. Hinze}. {\it Optimal and instantaneous control of the instationary Navier--Stokes equations}. PhD thesis, Habilitation thesis, Technische Universit\"at Berlin, 2000.

\bibitem{huan1994optimum}{\sc J. Huan and V. Modi}. {\it Optimum design of minimum drag bodies in incompressible laminar flow using a control theory approach}. Inverse Problems in Engineering, 1(1):1--25, 1994.

\bibitem{jameson1998optimum}{\sc A. Jameson, L. Martinelli, and N. Pierce}. {\it Optimum Aerodynamic Design Using the Navier--Stokes Equations}. Theoretical and Computational Fluid Dynamics, 1(10), 1998.

\bibitem{jameson2010optimization}{\sc A. Jameson and K. Ou}. {\it Optimization methods in computational fluid dynamics}. Encyclopedia of Aerospace Engineering, 2010.

\bibitem{porretta2013long}{\sc A. Porreta and E. Zuazua}. {\it Long time versus steady state optimal control}, SIAM J. Control Optim., 51(6):4242--4273, 2013.

\bibitem{porretta2014remarks}{\sc A. Porretta and E. Zuazua}. {\it Remarks on long time versus steady state optimal control}. Preprint, 2014.

\bibitem{raymond2006feedback}{\sc J.-P. Raymond}. {\it Feedback boundary stabilization of the two-dimensional Navier--Stokes equations}. SIAM Journal on Control and Optimization, 45(3):790--828, 2006.

\bibitem{temam2001navier}{\sc R. Temam}. {\it Navier-Stokes equations: theory and numerical analysis}, volume 343. American Mathematical Soc., 2001.

\bibitem{trelat2015turnpike}{\sc E. Tr\'elat and E. Zuazua}. {\it The turnpike property in finite-dimensional nonlinear optimal control}. Journal of Differential Equations 258(1):81--114, 2015.

\bibitem{wachsmuth2006optimal}{\sc D. Wachsmuth}. {\it Optimal control of the unsteady Navier-Stokes equations}. PhD thesis, Technische Universit{\"a}t Berlin, 2006.

\bibitem{zowe1979regularity}{\sc J. Zowe and S. kurcyusz}. {\it Regularity and stability for the mathematical programming problem in Banach spaces}. Applied mathematics and Optimization, 5(1):49--62, 1979.





\end{thebibliography}

\end{document}